\begin{document}
\numberwithin{equation}{section}
\newtheorem{theorem}{Theorem}
\newtheorem{algo}{Algorithm}
\newtheorem{lem}{Lemma} 
\newtheorem{de}{Definition} 
\newtheorem{ex}{Example}
\newtheorem{pr}{Proposition} 
\newtheorem{claim}{Claim} 
\newtheorem*{re}{Remark}
\newtheorem*{asi}{Aside}
\newtheorem{co}{Corollary}
\newtheorem{conv}{Convention}
\newcommand{\di}{\hspace{1.5pt} \big|\hspace{1.5pt}}
\newcommand{\idi}{\hspace{.5pt}|\hspace{.5pt}}
\newcommand{\hs}{\hspace{1.3pt}}
\newcommand{\thmf}{Theorem~1.15$'$}
\newcommand{\ndi}{\centernot{\big|}}
\newcommand{\nidi}{\hspace{.5pt}\centernot{|}\hspace{.5pt}}
\newcommand{\lpp}{\mbox{$\hspace{0.12em}\shortmid\hspace{-0.62em}\alpha$}}
\newcommand{\btt}{\mbox{$\raisebox{-0.59ex}
  {${{l}}$}\hspace{-0.215em}\beta\hspace{-0.88em}\raisebox{-0.98ex}{\scalebox{2}
  {$\color{white}.$}}\hspace{-0.416em}\raisebox{+0.88ex}
  {$\color{white}.$}\hspace{0.46em}$}{}}
  \newcommand{\un}{\hs\underline{\hspace{5pt}}\hs}
\newcommand{\lp}{\widehat{\lpp}}
\newcommand{\bt}{\hspace*{2pt}\widehat{\hspace*{-2pt}\btt}}
\newcommand{\PQ}{\bb{P}^1(\bb{Q})}
\newcommand{\pmn}{\cl{P}_{M,N}}
\newcommand{\he}{holomorphic eta quotient\hspace*{2.5pt}}
\newcommand{\hes}{holomorphic eta quotients\hspace*{2.5pt}}
\newcommand{\defG}{Let $G\subset\GG$ be a subgroup that is conjugate to a finite index subgroup of $\G$. } 
\newcommand{\defg}{Let $G\subset\GG$ be a subgroup that is conjugate to a finite index subgroup of $\G$\hs\hs} 
\renewcommand{\phi}{\varphi}
\newcommand{\Z}{\bb{Z}}
\newcommand{\ZD}{\Z^{\D}}
\newcommand{\N}{\bb{N}}
\newcommand{\Q}{\bb{Q}}
\newcommand{\A}{\widehat{A}}
\newcommand{\pii}{{{\pi}}}
\newcommand{\R}{\bb{R}}
\newcommand{\C}{\bb{C}}
\newcommand{\I}{\hs\cl{I}_{n,N}}
\newcommand{\St}{\operatorname{Stab}}
\newcommand{\D}{\cl{D}_N}
\newcommand{\rh}{{{\boldsymbol\rho}}}
\newcommand{\bh}{{\cl{M}}} 
\newcommand{\lv}{\hyperlink{level}{{\text{level}}}\hspace*{2.5pt}}
\newcommand{\fct}{\hyperlink{factor}{{\text{factor}}}\hspace*{2.5pt}}
\newcommand{\q}{\hyperlink{q}{{\mathbin{q}}}}
\newcommand{\rd}{\hyperlink{redu}{{{\text{reducible}}}}\hspace*{2.5pt}}
\newcommand{\ird}{\hyperlink{irredu}{{{\text{irreducible}}}}\hspace*{2.5pt}}
\newcommand{\str}{\hyperlink{strong}{{{\text{strongly reducible}}}}\hspace*{2.5pt}}
\newcommand{\rdn}{\hyperlink{redon}{{{\text{reducible on}}}}\hspace*{2.5pt}}
\newcommand{\atl}{\hyperlink{atinv}{{\text{Atkin-Lehner involution}}}\hspace*{3.5pt}}
\newcommand{\T}{\mathrm{T}}
\newcommand{\nm}{{N,M}}
\newcommand{\mn}{{M,N}}
\renewcommand{\H}{\fr{H}}
\newcommand{\W}{\text{\calligra W}_n}
\newcommand{\GG}{\cl{G}}
\newcommand{\g}{\fr{g}}
\newcommand{\Gm}{\Gamma}
\newcommand{\Gmtl}{\widetilde{\Gamma}_\ell}
\newcommand{\gm}{\gamma}
\newcommand{\go}{\gamma_1}
\newcommand{\gmt}{\widetilde{\gamma}}
\newcommand{\gmdt}{\widetilde{\gamma}'}
\newcommand{\gmot}{\widetilde{\gamma}_1}
\newcommand{\gmdot}{{\widetilde{\gamma}}'_1}
\newcommand{\s}{\Large\text{{\calligra r}}\hspace{1.5pt}}
\newcommand{\ms}{m_{{{S}}}}
\newcommand{\nisim}{\centernot{\sim}}
\newcommand{\level}{\hyperlink{level}{{\text{level}}}}
\newcommand{\Redcon}{the \hyperlink{red}{\text{Reducibility~Conjecture}}}
\newcommand{\Conred}{Conjecture~$1'$}
\newcommand{\Conredd}{Conjecture~$1''$}
\newcommand{\Conreddd}{Conjecture~$1'''$}
\newcommand{\Conired}{Conjecture~$2'$}
\newtheorem*{pro}{\textnormal{\textit{Proof of the proposition}}}
\newtheorem*{cau}{Caution}
\newtheorem*{conjec}{Conjecture}
\newtheorem{thrmm}{Theorem}[section]
\newtheorem{no}{Notation}
\renewcommand{\thefootnote}{\fnsymbol{footnote}}
\newtheorem{oq}{Open question}
\newtheorem{conj}{Conjecture}
\newtheorem{hy}{Hypothesis} 
\newtheorem{expl}{Example}
\newcommand\ileg[2]{\bigl(\frac{#1}{#2}\bigr)}
\newcommand\leg[2]{\Bigl(\frac{#1}{#2}\Bigr)}
\newcommand{\e}{\eta}
\newcommand{\sgn}{\operatorname{sgn}}
\newcommand{\bb}{\mathbb}
\newtheorem*{conred}{Conjecture~\ref{con1}$\mathbf{'}$}
\newtheorem*{conredd}{Conjecture~\ref{con1}$\mathbf{''}$}
\newtheorem*{conreddd}{Conjecture~\ref{con1}$\mathbf{'''}$}
\newtheorem*{conired}{Conjecture~\ref{19.1Aug}$\mathbf{'}$}
\newtheorem*{procl}{\textnormal{\textit{Proof}}}
\newtheorem*{thmbb}{Theorem~\ref{b1}$\mathbf{'}$}
\newcommand{\thmb}{Theorem~1$'$}
\newtheorem*{coo}{Corollary~\ref{17Aug}$\mathbf{'}$}
\newcommand{\cooo}{Corollary~\ref{17Aug}$'$}
\newtheorem*{cotw}{Corollary~\ref{17.1Aug}$\mathbf{'}$}
\newcommand{\cotww}{Corollary~\ref{17.1Aug}$'$}
\newtheorem*{cotb}{Corollary~\ref{b11}$\mathbf{'}$}
\newcommand{\cotbb}{Corollary~\ref{17.2Aug}$'$}
\newtheorem*{cne}{Corollary~\ref{15.5Aug}$\mathbf{'}$}
\newcommand{\cnew}{Corollary~\ref{15.5Aug}$'$\hspace{3.5pt}}
\newcommand{\fr}{\mathfrak}
\newcommand{\cl}{\mathcal}
\newcommand{\rad}{\mathrm{rad}}
\newcommand{\ord}{\operatorname{ord}}
\newcommand{\m}{\setminus}
\newcommand{\G}{\Gamma_1}
\newcommand{\GN}{\Gamma_0(N)}
\newcommand{\X}{\widetilde{X}}
\renewcommand{\P}{{\textup{p}}} 
\newcommand{\al}{{\hs\operatorname{al}}}
\newcommand{\p}{p_\text{\tiny (\textit{N})}}
\newcommand{\pN}{p_\text{\tiny\textit{N}}}
\newcommand{\U}{u_\textit{\tiny N}}
\newcommand{\Upr}{u_{\textit{\tiny N}^\prime}}
\newcommand{\Up}{u_{\textit{\tiny p}^\textit{\tiny e}}}
\newcommand{\Un}{u_{\textit{\tiny p}_\textit{\tiny 1}^{\textit{\tiny e}_\textit{\tiny 1}}}}
\newcommand{\Um}{u_{\textit{\tiny p}_\textit{\tiny m}^{\textit{\tiny e}_\textit{\tiny m}}}}
\newcommand{\Ut}{u_{\text{\tiny 2}^\textit{\tiny a}}}
\newcommand{\At}{A_{\text{\tiny 2}^\textit{\tiny a}}}
\newcommand{\Uh}{u_{\text{\tiny 3}^\textit{\tiny b}}}
\newcommand{\Ah}{A_{\text{\tiny 3}^\textit{\tiny b}}}
\newcommand{\Uprl}{u_{\textit{\tiny N}_1}}
\newcommand{\Uprlm}{u_{\textit{\tiny N}_i}}
\newcommand{\UM}{u_\textit{\tiny M}}
\newcommand{\UMp}{u_{\textit{\tiny M}_1}}
\newcommand{\w}{\omega_\textit{\tiny N}}
\newcommand{\wm}{\omega_\textit{\tiny M}}
\newcommand{\wa}{\omega_{\text{\tiny N}_\textit{\tiny a}}}
\newcommand{\wma}{\omega_{\text{\tiny M}_\textit{\tiny a}}}
\renewcommand{\P}{{\textup{p}}} 
\tikzset{decorate sep/.style 2 args=
{decorate,decoration={shape backgrounds,shape=circle,shape size=#1,shape sep=#2}}}

\title[\tiny{Finiteness of irreducible holomorphic eta quotients}] 
{Finiteness of irreducible holomorphic eta quotients of a given level}

\author{Soumya Bhattacharya}
\address 
{CIRM : FBK\\
via Sommarive 14\\
I-38123 Trento}

\email{soumya.bhattacharya@gmail.com}
\subjclass[2010]{Primary 11F20, 11F37, 11F11; Secondary 
11G16, 11F12}

 \begin{abstract}
   We show that for any positive integer $N$,
   there are only finitely many
holomorphic eta quotients of level $N$,
  none of which is a product of two 
  holomorphic eta quotients other than 1 and itself.
  This result is an analog of Zagier's conjecture/ 
  Mersmann's theorem 
  which states that: Of any given weight,
  there are
  only finitely many irreducible holomorphic eta quotients, none of which is an integral rescaling of 
  another eta quotient. We 
  construct 
  such eta quotients 
  for all cubefree levels. 
  In particular, 
  our construction 
  demonstrates the existence of irreducible holomorphic eta quotients of arbitrarily large
  weights.
 \end{abstract}

\maketitle

 \section{Introduction}
 \label{intro}
 The Dedekind eta function is defined by the infinite product:\hypertarget{queue}{}
 \begin{equation} \eta(z):=q^{\frac{1}{24}}\prod_{n=1}^\infty(1-q^n)\hspace{5pt}\text{for all $z\in\H$,}
\label{17.4Aug}\end{equation} 
where $q^r=q^r(z):=e^{2\pi irz}$ for all $r$ 
and 
$\H:=\{\tau\in\C\hs\idi\operatorname{Im}(\tau)>0\}$. 
Eta is a holomorphic function on $\H$ with no zeros.
This function has its significance in Number Theory. For example, 
$1/\e$ is the generating function for the ordinary partition function $p:\N\rightarrow\N$
(see \cite{ak})
and 
 the constant term in the Laurent
 expansion at 
 $1$ of 
the Epstein zeta function $\zeta_Q$ 
attached to a positive definite quadratic form $Q$ is related
 via the Kronecker limit formula to
the value of $\e$
at the root of the associated quadratic polynomial in $\H$ (see \cite{coh}).
The value of $\e$ at such a quadratic irrationality of discriminant $-D$
is also related via the Lerch/Chowla-Selberg formula to  the values of the Gamma function 
with arguments in $D^{-1}\N$ 
 \hspace{.5pt}(see 
  \cite{pw}). 
In fact, the eta function
comes up naturally in many other areas of Mathematics (see the Introduction in \cite{B-three} for a brief
overview of them). 

The function $\e$ is a modular form
of weight $1/2$ with a multiplier system 
on $\operatorname{SL}_2(\Z)$ (see \cite{b}).
An 
eta quotient $f$ is a finite product of the form 
\begin{equation}
 \prod\e_d^{X_d},
\label{13.04.2015}\end{equation}
where $d\in\N$, $\eta_d$ is the rescaling of $\eta$ by $d$, defined by
\begin{equation}
 \e_d(z):=\e(dz) \ \text{ for all $z\in\H$}
\end{equation}
and $X_d\in\Z$.
Eta quotients naturally inherit modularity 
from $\e$: The eta quotient $f$ in (\ref{13.04.2015}) transforms like a modular form of
weight $\frac12\sum_dX_d$ with a multiplier system on suitable congruence subgroups of $\operatorname{SL}_2(\Z)$: 
The largest among
these subgroups is 
\begin{equation}
 \Gm_0(N):=\Big{\{}\begin{pmatrix}a&b\\ c&d\end{pmatrix}\in
\operatorname{SL}_2(\Z)\hspace{3pt}\Big{|}\hspace{3pt} c\equiv0\hspace*{-.3cm}\pmod N\Big{\}},
\end{equation}
where 
\begin{equation}
 N:=\operatorname{lcm}\{d\in\N\hs\idi\hs X_d\neq0\}.
\end{equation}
We call $N$ the \emph{level} of $f$.
Since $\eta$ is non-zero on $\H$, 
the eta quotient $f$ 
is holomorphic if and only if $f$ does not have any pole at the cusps of 
$\Gamma_0(N)$.

An \emph{eta quotient 
on $\Gm_0(M)$} is
an eta quotient whose level divides $M$.
Let $f$, $g$ and $h$ be nonconstant \hes on $\Gm_0(M)$
such that 
$f=g\times h$. Then we say that $f$ is \emph{factorizable on} $\Gm_0(M)$. 
We call a holomorphic eta quotient $f$ of level $N$ \emph{quasi-irreducible} (resp. \emph{irreducible}),
if it is not factorizable on $\Gm_0(N)$ (resp. on $\Gm_0(M)$ for all multiples $M$ of $N$).

Irreducible holomorphic eta quotients were first considered by Zagier, who conjectured (see \cite{z}) that:
\emph{There are only finitely many primitive and irreducible holomorphic eta quotients of a given weight.}
An eta quotient $f$ is called 
\emph{primitive}  
if 
no eta quotient $h$ and 
no integer $\nu>1$
satisfy the equation $f=h_\nu$, where $h_\nu(z):=h(\nu z)$ for all $z\in\H$.
Zagier's conjecture was 
established by his student Mersmann in 
an 
excellent \emph{Diplomarbeit} 
\cite{c}. 
I gave a simplified proof of 
this theorem
in \cite{B-five}. 
Unfortunately, none of 
the existing proofs of Mersmann's finiteness theorem
yield
an explicit upper bound 
for the levels of 
primitive and irreducible holomorphic 
eta quotients of a given weight.
However, 
another approach would be to
look at the problem from the dual perspective,
where instead of considering holomorphic eta quotients of a given weight,
we consider holomorphic eta quotients of a given level. 
If one could obtain a nontrivial estimate for the least possible weight for a primitive and irreducible holomorphic eta quotient of level $N$,
that would immediately imply
an effective proof of Mersmann's finiteness theorem. 
For example, if $p$ is a prime and if $N=p$ or $p^2$, then the least possible weight of a primitive and irreducible holomorphic
eta quotient of level $N$ is $(p-1)/2$ (see Section~6.3 in~\cite{B-two}).
Though  the notions of irreducibility and quasi-irreducibility of holomorphic 
\mbox{eta quotients are conjecturally equivalent (see~\cite{B-three}), in practice}
irreducibility of a holomorphic eta quotient is much harder to determine than
its quasi-irreducibility. 
However, since every irreducible holomorphic eta quotient is quasi-irreducible,
the least possible weight of a primitive and irreducible holomorphic eta quotient
of level $N$ is 
bounded 
below by the least possible weight of a primitive and 
quasi-irreducible holomorphic eta quotient of level $N$. 
We denote the later by $k_{\min}(N)/2$. 
\begin{table}
 \caption{$N$ vs. $k_{\min}(N)$}
\begin{center}
  \begin{tabular}{ c | c } 
N&
$k_{\min}$ 
\\\hline
$2\cdot3$&1\\$2\cdot5$&2\\$2^2\cdot3$&1\\$2\cdot7$&3\\$3\cdot5$&2\\$2^4$&2\\$2\cdot3^2$&2\\$2^2\cdot5$&2\\$3\cdot7$&3\\$2\cdot11$&4\\$2^3\cdot3$&2\\$2\cdot13$&5\\$2^2\cdot7$&3\\$2\cdot3\cdot5$&2\\$2^5$&2\\$2\cdot17$&6\\$2^2\cdot3^2$&2\\$2\cdot19$&7\\
$3\cdot13$&5\\$2^3\cdot5$&2\\
$2\cdot3\cdot7$&2\\$2^2\cdot11$&3\\$3^2\cdot5$&2\\$2\cdot23$&8\\$2^4\cdot3$&2\\$2\cdot5^2$&2\\$3\cdot17$&6
\end{tabular}
\quad
  \begin{tabular}{  c | c  } 
N&
$k_{\min}$ 
\\\hline
$2^2\cdot13$&4\\$2\cdot3^3$&2\\$2^3\cdot7$&2\\$3\cdot19$&6\\$2^2\cdot3\cdot5$&2\\$3^2\cdot7$&2\\$2^6$&2\\$2\cdot3\cdot11$&3\\$2^2\cdot17$&4\\$2\cdot5\cdot7$&2\\$2^3\cdot3^2$&2\\$2\cdot37$&13\\$3\cdot5^2$&3\\$2\cdot3\cdot13$&3\\$2^4\cdot5$&2\\
$3^4$&3\\$2^2\cdot3\cdot7$&2\\$5\cdot17$&6\\$2^3\cdot11$&2\\$2\cdot3^2\cdot5$&2\\$2\cdot47$&16\\$2^5\cdot3$&2\\$2\cdot7^2$&3\\$3^2\cdot11$&4\\$2^2\cdot5^2$&2\\$2\cdot3\cdot17$&3\\$2^3\cdot13$&3
\end{tabular}
\quad
  \begin{tabular}{  c | c  } 
N&
$k_{\min}$ 
\\\hline
$3\cdot5\cdot7$&3\\$2^2\cdot3^3$&2\\$3\cdot37$&11\\$2^4\cdot7$&2\\$2^3\cdot3\cdot5$&2\\$2\cdot3^2\cdot7$&2\\$2^7$&3\\$2\cdot5\cdot13$&3\\$7\cdot19$&8\\$3^3\cdot5$&3\\$2^3\cdot17$&3\\$2\cdot3\cdot23$&3\\$2^2\cdot5\cdot7$&2\\$2^4\cdot3^2$&2\\$2^2\cdot37$&8\\$2\cdot3\cdot5^2$&2\\$2\cdot7\cdot11$&3\\$2\cdot3^4$&3\\
$2^3\cdot3\cdot7$&2\\$2\cdot5\cdot17$&4\\$2^2\cdot43$&9\\$2^4\cdot11$&2\\$2^2\cdot3^2\cdot5$&2\\$2\cdot7\cdot13$&3\\$2^3\cdot23$&3\\$3^3\cdot7$&3\\$2^6\cdot3$&2
\end{tabular}
\quad
  \begin{tabular}{  c | c  } 
N&
$k_{\min}$ 
\\\hline
$2^2\cdot7^2$&3\\$2^3\cdot5^2$&3\\$11\cdot19$&11\\$2^3\cdot3^3$&2\\$2^4\cdot3\cdot5$&2\\$3^5$&3\\$2^2\cdot3^2\cdot7$&2\\$2^8$&3\\$2^5\cdot3^2$&2\\$2^2\cdot3^4$&2\\$2^4\cdot3^3$&2\\$2^9$&3\\$2^6\cdot3^2$&2\\$5^4$&5\\$3^6$&3\\$2^8\cdot3$&2\\$2^{10}$&3\\$17\cdot97$&21\\$2^{11}$&3\\$3^7$&5\\$7^4$&7\\$5^5$&5\\$2^{12}$&3\\$3^8$&5\\$2^{13}$&3\\$2^{14}$&3\\$2^{15}$&4
\end{tabular}
\end{center}
\label{table}\end{table}
With a huge amount of numerical evidence similar to what we see below in 
Table~\ref{table}, we speculate that
 \begin{conj}
 For a positive integer $N$, if there exists a primitive and irreducible holomorphic eta quotient 
 of weight $k/2$ and level $N$, then 
\begin{equation}
 4k^2\geq\max_{\substack{p^n\|N\\p \text{ prime}}}np.
\label{chulchera}
\end{equation}
 \end{conj}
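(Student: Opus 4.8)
The plan is to prove \eqref{chulchera} by localizing at the prime $p$ with $p^n\|N$, and showing that a primitive holomorphic eta quotient $f$ of level $N$ and weight $k/2$ with $4k^2<np$ must factor as a product of two nonconstant holomorphic eta quotients on some $\Gamma_0(M)$.

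First I would reduce to the case $N=p^n$. Writing $N=p^nN'$ with $\gcd(p,N')=1$ and $f=\prod_{d\mid N}\e_d^{X_d}$, the cusps $p^i$ ($0\le i\le n$) of $\Gamma_0(N)$ impose the holomorphicity inequalities $a_i:=\sum_{d\mid N}\gcd(p^i,d)^2X_d/d\ge 0$, and a short computation shows that $a_i$ depends on $X$ only through the rationals $b_j:=\sum_{m\mid N',\,p\nmid m}X_{p^jm}/m$, via $a_i=\sum_{j=0}^n p^{2\min(i,j)-j}b_j$. One then argues that, after clearing denominators, $(b_0,\dots,b_n)$ is the exponent vector of a holomorphic eta quotient of level $p^n$ and weight $\le k/2$, that $b_0\neq 0$ by primitivity of $f$ and $b_n\neq 0$ by the level hypothesis, and that a holomorphic factorization of this auxiliary eta quotient pulls back to one of $f$. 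The awkward point is that irreducibility of $f$ quantifies over \emph{all} multiples $M$ of $N$, so it will not transfer verbatim; it should be enough, however, to establish only the one-sided implication ``if the auxiliary eta quotient peels, then $f$ is factorizable on some $\Gamma_0(M)$''.

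For the prime-power case, set $A=\bigl(p^{2\min(i,j)-j}\bigr)_{0\le i,j\le n}$, so that holomorphy reads $Ax\ge 0$. A direct computation shows that $A$ is invertible and the cone $\{x:Ax\ge 0\}$ is simplicial; solving $A^{\mathsf T}y=\mathbf 1$ gives $\mathbf 1^{\mathsf T}A^{-1}=\bigl(\tfrac{p}{p+1},\tfrac{p-1}{p(p+1)},\dots,\tfrac{p-1}{p^{n-1}(p+1)},\tfrac{p}{p^n(p+1)}\bigr)$, and each of the $n+1$ extremal rays turns out to be supported on at most three consecutive powers of $p$. Hence a primitive eta quotient of level exactly $p^n$ lies on the boundary of the cone but generically away from its extremal rays (it must satisfy $X_1\neq 0$ and $X_{p^n}\neq 0$), which is why a bare Gordan / extremal-ray argument will not deliver the bound. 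I would instead use irreducibility to forbid peeling the elementary holomorphic blocks $\e_{p^i}\e_{p^{i+1}}$ and their Atkin--Lehner conjugates off $f$; this should force the exponent sequence to change sign often enough, with amplitudes growing geometrically in $p$ as one moves inward from either end, and weighing these amplitudes against the valence formula $\sum_c\operatorname{ord}_c(f)=\tfrac{k}{24}\,[\mathrm{SL}_2(\Z):\Gamma_0(p^n)]$ should yield $4k^2\ge np$.

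The main obstacle, I expect, is exactly this last step: reconciling the global (all-$M$) character of irreducibility with the local combinatorics at $p$, given that the minimal examples sit on the boundary of the cone but not on its extremal rays once $n\ge 2$. Neither Gordan's lemma nor extremal-ray bookkeeping is enough, and one must pin down the right finite family of holomorphic ``test divisors'' to subtract; moreover, since \eqref{chulchera} is sharp for $N=4$ and essentially sharp for $N=9$, the estimates cannot afford to lose constant factors in the $p$-aspect. A lesser nuisance is that the reduction to prime-power level has to be carried out without violating the non-negativity constraints at the cusps of $\Gamma_0(N)$ attached to the other prime divisors of $N$.
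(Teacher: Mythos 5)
The statement you are trying to prove is not a theorem of the paper: it is stated there as a \emph{conjecture}, supported only by numerical evidence (Table~1), and the paper offers no proof of it. So there is no ``paper proof'' to compare against; what must be judged is whether your sketch actually closes the problem, and it does not. You yourself flag the decisive step as ``the main obstacle'': the claim that forbidding the peeling of blocks such as $\e_{p^i}\e_{p^{i+1}}$ forces sign changes ``with amplitudes growing geometrically in $p$'' and that weighing these against the valence formula ``should yield $4k^2\ge np$'' is a heuristic, not an argument. No inequality is actually derived, and since irreducibility quantifies over all multiples $M$ of $N$ (so the relevant test divisors live on $\Gamma_0(M)$ for unbounded $M$), you have not even identified a finite family of holomorphic test quotients whose non-divisibility of $f$ can be converted into the claimed growth of the exponents. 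This is precisely the part of the problem that is open.

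The reduction to prime-power level also has concrete gaps. Primitivity of $f$ means only that $\gcd\{d: X_d\neq 0\}=1$, i.e.\ some $X_d\neq 0$ with $p\nmid d$; it does not give $b_0=\sum_{m\mid N'}X_m/m\neq 0$, since that sum of rationals can vanish. Likewise, level exactly $N$ gives some $X_d\neq0$ with $p^n\| d$, not $b_n\neq 0$. More seriously, the projection $X\mapsto(b_0,\dots,b_n)$ only controls the orders at the cusps $1/p^i$; a factorization of the auxiliary level-$p^n$ quotient into two holomorphic pieces does not pull back to a factorization of $f$ that is holomorphic at the remaining cusps $1/t$ of $\Gamma_0(N)$ (nor conversely), so neither direction of the transfer is established. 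As it stands the proposal is a research plan whose essential steps are conjectural, which is consistent with the status of the statement in the paper.
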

Mersmann's finiteness theorem is equivalent to say that for each $k\in\N$,
there exists an $M_k\in\N$ such that if there exists a primitive and irreducible
holomorphic eta quotient of weight less than or equal to $k/2$ and level $N$,
then $N$ divides $M_k$. From \cite{z}, we know that $M_1=12$. 
Also, from 
Corollary~A.1 
in the appendix 
of \cite{B-two}, we know\footnote{Unlike the general case, irreducibility of a holomorphic eta quotient of weight~1 is rather easy to determine, 
because 
a holomorphic eta quotient of weight~1 and level~$N$ is irreducible
if and only if it is not factorizable on $\Gm_0(\operatorname{lcm}(N,12))$ (see Lemma~1 in \cite{B-three}).
In particular, the irreducibility of the holomorphic eta quotients listed in 
Appendix~A in \cite{B-two}
could be easily verified.}
\begin{equation}
2^{8}\cdot3^3\cdot5^2\cdot7\cdot11\idi\hspace*{2pt}M_2.
\end{equation}
In particular,
the truth of the above conjecture would imply 
\begin{equation}
 M_2\idi\hspace*{1.5pt}2^{8}\cdot3^5\cdot5^3\cdot7^2\cdot11\cdot13. 
\end{equation}
Since there are only finitely many holomorphic eta quotients of a given weight and level (see~\ref{27.04.2015.3Z}),
knowing $M_k$ is equivalent to having a complete list 
of primitive and irreducible holomorphic eta quotients of weight~$k/2$,
up~to a bounded amount of computation.
About thirty years ago, Zagier gave such a list for
eta quotients of weight $1/2$ 
(see~\cite{z}), 
the exhaustiveness of 
which was also established 
by Mersmann (see~\cite{c, B-six}). Though Mersmann's finiteness theorem predicts the 
existence of similar lists of eta quotients for any given weight, till now
we do not even
have such an exhaustive list for 
primitive and irreducible holomorphic
eta quotients of weight $1$ (for an incomplete list, see Appendix~A in \cite{B-two}).
On the other hand, it is much
easier to list \emph{all} the irreducible holomorphic eta quotients of a given level! For example, let us look at
the following cases:
Since the only holomorphic eta quotients of level~1 are the powers of eta, $\e$ is the only 
irreducible holomorphic eta quotient of level~1.
In general, since 
the weight of
any holomorphic eta quotient
is at least $1/2$,
each \hypertarget{levp}{eta} quotient of weight $1/2$ is irreducible. 
In particular, for any $p\in\N$, the eta quotient $\e_p$ is irreducible.
Again, from Corollary~2 in \cite{B-three}, we know that for any prime $p$,
the holomorphic eta quotients $\e^p/\e_p$ and $\e^p_p/\e$ are irreducible 
(the irreducibility of the later also follows from Lemma~\ref{10.09.2015.1} below).
It is easy to show that any other holomorphic eta quotient of level $p$ 
except these three is factorizable on $\Gm_0(p)$. So,
the above three are the only irreducible \hes of a prime level $p$.
Here, we shall show that the
finiteness of irreducible holomorphic eta quotients 
of a given level
also holds in general. 
This in particular, implies 
that the maximum of the
weights of the irreducible holomorphic eta quotients of level $N$
is bounded above  with respect to $N$. 
Conversely, since the valence formula implies that there are only finitely many holomorphic
eta quotients of a given level and weight (see \ref{27.04.2015.3Z}),  
the finiteness of irreducible holomorphic eta quotients of a given level 
is also implied by such 
an upper bound. 
In particular,
the 
finiteness of quasi-irreducible holomorphic eta quotients of a given level
(see Theorem~\ref{b1}),
has an application
in \cite{B-three}, in showing that the levels of the factors of a holomorphic eta quotient $f$
are bounded above 
in terms of the level of $f$. 

Before ending this section, let us compare the situation with that of the modular 
forms with the trivial multiplier system. Note that the notions of irreducibility and factorizability
also makes sense if we replace ``holomorphic eta quotients'' with ``modular forms'' above. For example,
the 
modular form $\Delta:=\e^{24}$ of level~1 is not factorizable  into a product of modular forms with the trivial multiplier system on $\operatorname{SL}_2(\Z)$,
since $\Delta$ is a cusp form of the least possible weight on 
the full modular group.
However, $\Delta$ 
is factorizable on $\Gm_0(2):$
\begin{equation}
 \Delta=\e^8\e_2^8\times\frac{\e^{16}}{\e_2^8}.
\label{11.09.2015}\end{equation}
From (\ref{27.04}), it follows readily that 
$\e^p/\e_p$ is holomorphic for each prime $p$.
In particular, so is the 
rightmost eta quotient in 
(\ref{11.09.2015}). 
Also, 
from Newman's criteria (see \cite{new, newm} or \cite{rw}), it follows that the multiplier systems of 
both of the eta quotients on the right hand side of (\ref{11.09.2015}) are trivial.

For $k\in2\N$, we define 
the normalized Eisenstein series $E_k$ by 
\begin{equation}
E_k(z):=1-\frac{2k}{B_k}\sum_{n=1}^\infty\sigma_{k-1}(n)q^n,
\end{equation}
where the function $\sigma_{k-1}:\N\rightarrow\N$ is given by
\begin{equation}
\sigma_{k-1}(n):=\sum_{d\idi n}d^{k-1}
\end{equation}
and the $k$-th Bernoulli number $B_k$ is defined by 
\begin{equation}
\frac{t}{e^t-1}=\sum_{k=0}^{\infty}\frac{B_k}{k!}\cdot t^k.
\end{equation}
For each even integer $k>2$, $E_k$ is a modular form of weight $k$ on  $\operatorname{SL}_2(\Z)$ (see \cite{z}). 
Since there are no nonzero modular forms of 
odd weight or weight~2 
with the trivial multiplier system 
on  $\operatorname{SL}_2(\Z)$,
neither $E_4$ nor $E_6$ is factorizable on $\operatorname{SL}_2(\Z)$.
However, since $E_6(i)=0$ and $E_4(e^{2\pi i/3})=0$
and since the valence formula (\ref{3.2Aug}) for $\Gm_0(2)$ 
(resp. for $\Gm_0(4)$) implies that the modular form
\begin{equation}
  f_1:=\e_2(i)^{24}\hspace{1pt}\frac{\e^{16}}{\e_2^8}-\e(i)^{24}\hspace{1pt}\frac{\e_2^{16}}{\e^8}, \ \ 
  \text{resp. }   
  f_2:=\e_4(e^{\frac{2\pi i}{3}})^{8}\hspace{1pt}\frac{\e^{8}}{\e_2^4}-\e(e^{{\frac{2\pi i}{3}}})^{8}\hspace{1pt}\frac{\e_4^{8}}{\e_2^4} 
\end{equation}
of weight 4 on $\Gm_0(2)$ (resp. of weight 2 on $\Gm_0(4)$) only has a simple zero at $i$ in $\Gm_0(2)\backslash\H$ (resp. at $e^{2\pi i/3}$ in $\Gm_0(4)\backslash\H$), it follows that $f_1$ 
is a nontrivial factor of $E_6$ on $\Gm_0(2)$ (resp. $f_2$ is a nontrivial factor of $E_4$ on $\Gm_0(4)$).
It is easy to check that for all integers $N>1$, the stabilizers of $i$ 
and $e^{2\pi i/3}$ in $\Gm_0(N)$
are trivial.
The holomorphy of the eta quotients in the above linear combinations
follows trivially from (\ref{28.04}), 
once one notes the 
outermost columns of the
matrix in (\ref{r1}).
The triviality of the multiplier systems of these eta quotients 
follows again from Newman's criteria (see \cite{new, newm}~or~\cite{rw}).

Also, it follows 
from the valence formula (\ref{3.2Aug}) for $\operatorname{SL}_2(\Z)$ that 
every modular form 
with the trivial multiplier system
on $\operatorname{SL}_2(\Z)$ 
has a unique factorization of the form:
\begin{equation}
C_0\hspace*{1pt}E_4^aE_6^b\hspace*{-2pt}\prod_{t\in\C^*}\hspace*{-2pt}(E_4^3-tE_6^2)^{c_t},
\end{equation}
for some $C_0\in\C$ and some nonnegative integers $a,b,c_t$, where $c_t$ is zero
for all but finitely many $t$. 
In particular, any modular form with the trivial multiplier system and of weight greater than 12 on  $\operatorname{SL}_2(\Z)$ 
is factorizable on $\operatorname{SL}_2(\Z)$.
We have $E_4^3-E_6^2=1728\Delta$ (see~\cite{z}), which is factorizable
on $\Gm_0(2)$ (see~\ref{11.09.2015}).
Clearly, 
$E_4^3-tE_6^2$ is nonzero at $\infty$ for all $t\neq1$.
The valence formula 
for $\operatorname{SL}_2(\Z)$ implies
that for each $t\in\C^*\smallsetminus\{1\}$, 
$E_4^3-tE_6^2$ 
vanishes only at one point $z_t$ in a fundamental domain of
$\operatorname{SL}_2(\Z)$. 
Since $t$ is nonzero and since $E_4$ and $E_6$ have no common zeros,
neither  $E_4(z_t)$ nor $E_6(z_t)$ is zero. In particular, 
the stabilizer of 
$z_t$ in $\operatorname{SL}_2(\Z)$ (hence, also in $\Gm_0(2)$) 
is trivial. It follows that $E_4^3-tE_6^2$ only has a simple zero at $z_t$.
In particular, for each $t\in\C^*\smallsetminus\{1\}$, 
the modular form above 
is not 
factorizable on  $\operatorname{SL}_2(\Z)$.
Now, the valence formula (\ref{3.2Aug}) for $\Gm_0(2)$ 
implies that for each such $t$,\hspace*{1.2pt} 
$E_4^3-tE_6^2$ has three distinct zeros on $\Gm_0(2)\backslash\H$,
one of which is equivalent to $z_t$ under the action of $\Gm_0(2)$ on $\H$,
whereas the modular form
\begin{equation}
\e_2(z_t)^{24}\hspace{1pt}\frac{\e^{16}}{\e_2^8}-\e(z_t)^{24}\hspace{1pt}\frac{\e_2^{16}}{\e^8}
\end{equation}
of weight 4 on $\Gm_0(2)$ 
only has a simple zero at $z_t$ in $\Gm_0(2)\backslash\H$. It follows that for $t\in\C^*\smallsetminus\{1\}$, the modular form above
is a factor of $E_4^3-tE_6^2$ on 
$\Gm_0(2)$.
Thus, 
every modular form on $\operatorname{SL}_2(\Z)$ is factorizable on $\Gm_0(2)\cap\Gm_0(4)=\Gm_0(4)$.
However, since the smallest weight of which 
nonzero modular forms 
with the trivial multiplier system exist is 2, the modular form of weight~2 on $\Gm_0(N)$ defined by 
\begin{equation}
NE_2(Nz)-E_2(z)
\end{equation}
(see \cite{ds}) is irreducible for all  $N>1$.
It is not known whether for any level~$N$,
there exist any ``irreducible modular form'' of weight greater than~2.
On the contrary, 
 it follows from Theorem~\ref{10.09.2015} below (or from Corollary~2 in~\cite{B-three}) that
there exist irreducible holomorphic eta quotients (i.~e.
which are not products of other holomorphic 
eta quotients)
of arbitrarily large weights.
We shall 
also see some irreducibility criteria 
for
holomorphic eta quotients
in 
\cite{B-seven}.

\section{The results}
\label{26.08.2015.2} 
In order to state our main results,
first 
we introduce 
some notations. 
Denote the set of positive integers by $\N$.
For $N\in\N$, by ${{{\wp}}_{{N}}}$ we denote the set of prime divisors of $N$.
For 
a divisor $d$ of $N$, we say that $d$ \emph{exactly divides} $N$ and write $d\|N$ 
if $\gcd(d,N/d)=1$.
Below in Corollary~\ref{b11}, we provide an upper bound for the weights of the irreducible 
holomorphic eta quotients
of a given~level 
in~terms of
the function $\kappa:\bb{N}\rightarrow\bb{N}$ 
defined by
\begin{equation}\kappa(N)=\varphi\left(\rad(N)\right)\prod_
{\substack{p\in{{\wp}}_{{N}}\\p^n\|N
 }}
\big{(}(n-1)(p-1)+2\big{)},\label{SB1}\end{equation}
where $\varphi$ denotes Euler's totient function and
$\rad(N)$  denotes the product of the distinct prime divisors of $N$.
Let $\operatorname{d}:\N\rightarrow\N$ 
denote the divisor function.
Clearly, 
we have \mbox{ $\kappa(N)\leq \varphi(\rad(N))^2\hspace*{-2.5pt}\cdot\operatorname{d}(N)$.}
Below in Theorem~\ref{b1} (resp.~Corollary~\ref{b11}), we provide an upper bound $\varOmega(N)$ (resp.~$\varOmega_0(N)$)
for the number of the holomorphic eta quotients which are not factorizable on $\Gm_0(N)$
(resp.~irreducible holomorphic eta quotients of level~$N$).
We define the functions $\varOmega, \varOmega_0:\N\rightarrow\N$ by\hspace*{1.2pt} $\varOmega(1)=\varOmega_0(1)=1$ and for $N>1$,
\begin{align}
\varOmega(N)=&\prod_{\substack{p\in{{\wp}}_{{N}}\\p^n\|N
}}p^{2\operatorname{d}(N)}\Big(\frac{p^2-1}{p^4}\Big)^{\operatorname{d}(N/p^n)}
-\frac1{d(N)!}\prod_{\substack{
p\in{{\wp}}_{{N}}\\p^n\|N
}}\frac{(p^2-1)^{\operatorname{d}(N)}}{(p+1)^{2\operatorname{d}(N/p^n)}}
\label{mousekokhay}\\
&+2\Big(\hspace*{-1.5pt}\operatorname{d}(N)-\sum_{\substack{p\in{{\wp}}_{{N}}\\p^n\|N}}\operatorname{d}(N/p^n)\Big)-2^{\omega(N)}(\omega(N)-1)
\nonumber\end{align}
and
\begin{equation}\varOmega_0(N)=\varOmega(N)-2\operatorname{d}(N)+2^{\omega(N)}+1,
\label{mousekokhay1}\end{equation}
where $\omega(N)$ denotes the number of distinct prime divisors of $N$.
It is rather easy 
to show that 
$\varOmega(N)$ (and hence, also $\varOmega_0(N)$) is bounded above by
$\rad(N)^{2\operatorname{d}(N)}$
for all $N$ (see~Lemma~\ref{May 15, 2017} below).
We say that a holomorphic eta quotient $f$ is \emph{divisible} by a holomorphic eta quotient $g$
if $f/g$ is holomorphic.
We shall show that
\begin{theorem}For all $N\in\N$, 
the following assertions hold: \label{b1}
\begin{itemize}
\item[$(a)$] The weight of any holomorphic eta quotient on $\GN$ which is not factorizable on $\GN$\hspace{.6pt} is less than 
$\kappa(N)/2$.
\item[$(b)$] 
The number of nonconstant
 holomorphic eta quotients on $\GN$ which are not factorizable on $\GN$
 is 
bounded above by $\varOmega(N)$.
\item[$(c)$] 
 There are 
 at most $\varOmega_0(N)$
quasi-irreducible holomorphic eta quotients of level~$N$.
\end{itemize}
\end{theorem}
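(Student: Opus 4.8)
The plan is to reduce the whole statement to a finite-dimensional linear-algebra problem over the cusps of $\Gm_0(N)$. For an eta quotient $f=\prod_{d\mid N}\e_d^{X_d}$ on $\Gm_0(N)$, holomorphy is equivalent to the system of linear inequalities $\ord_s(f)\geq 0$ as $s$ runs over the $\operatorname{d}(N)$ cusps of $\Gm_0(N)$ (here we use the standard order-at-cusps formula, referenced as (\ref{28.04}) in the excerpt, whose matrix of coefficients is the $\operatorname{d}(N)\times\operatorname{d}(N)$ matrix appearing in (\ref{r1})). Thus the holomorphic eta quotients on $\Gm_0(N)$ form the integer points of a rational polyhedral cone $\cl{C}_N\subset\Z^{\operatorname{d}(N)}$, and a holomorphic eta quotient is factorizable on $\Gm_0(N)$ precisely when it is the sum (in exponent vectors) of two nonzero lattice points of $\cl{C}_N$. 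Hence the eta quotients that are \emph{not} factorizable on $\Gm_0(N)$ are exactly the extremal (indecomposable) elements of the monoid $\cl{C}_N\cap\Z^{\operatorname{d}(N)}$ --- a Hilbert-basis type set. Part $(a)$ will follow by bounding the weight $\tfrac12\sum_d X_d$ over this Hilbert basis; part $(b)$ by bounding its cardinality; part $(c)$ by noting that quasi-irreducible holomorphic eta quotients of level exactly $N$ form a subset of the indecomposables on $\Gm_0(N)$, minus those whose actual level properly divides $N$ (there are at most $2\operatorname{d}(N)-2^{\omega(N)}-1$ of the latter to subtract, giving the correction term in (\ref{mousekokhay1})).

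For part $(a)$, I would pass to the extremal rays of the cone $\cl{C}_N$. An indecomposable lattice point lies in a bounded region determined by the rays: writing the order-at-cusps map via the multiplicative structure $N\mapsto\prod p^n$, the matrix in (\ref{r1}) factors as a Kronecker product over the prime powers $p^n\|N$, so its inverse and the facet normals of $\cl{C}_N$ factor correspondingly. This is where the arithmetic function $\kappa(N)=\varphi(\rad(N))\prod_{p^n\|N}\bigl((n-1)(p-1)+2\bigr)$ enters: the local factor $(n-1)(p-1)+2$ is the bound coming from a single prime power $p^n$ (matching the known value $(p-1)/2$ for the least weight when $N=p$ or $p^2$ mentioned in the introduction), and $\varphi(\rad(N))$ absorbs the contribution of the squarefree part via the Kronecker product; the weight of an indecomposable is at most the product of the local weight bounds, which is strictly less than $\kappa(N)/2$. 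The estimate $\sum X_d\le\sum|X_d|$ together with the denominator-clearing bounds on $\cl{C}_N^{-1}$ does the rest.

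For part $(b)$, once $(a)$ gives a box $[-B_d,B_d]$ containing every indecomposable exponent vector, the crude count $\prod_d(2B_d+1)$ is already of the shape $\rad(N)^{2\operatorname{d}(N)}$; to get the sharper $\varOmega(N)$ in (\ref{mousekokhay}) I would count the lattice points of $\cl{C}_N$ inside that box more carefully, using that $\cl{C}_N$ is a Kronecker product of the simple one-prime-power cones and that its volume (Ehrhart leading term) is $\prod_{p^n\|N}\bigl((p^2-1)/p^4\bigr)^{\operatorname{d}(N/p^n)}$ up to the stated normalization --- the first product in (\ref{mousekokhay}) is this box-count and the $1/\operatorname{d}(N)!$ term subtracts the lattice points of the ``interior simplex'' of decomposable points, while the linear correction $2(\operatorname{d}(N)-\sum\operatorname{d}(N/p^n))$ and $-2^{\omega(N)}(\omega(N)-1)$ are lower-order Ehrhart corrections and an inclusion--exclusion over the $\omega(N)$ prime directions. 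Part $(c)$ then follows formally: among the $\le\varOmega(N)$ indecomposables on $\Gm_0(N)$, the ones that fail to be quasi-irreducible of level $N$ are exactly those that are eta quotients of a proper divisor $M\mid N$; these come in the $2^{\omega(N)}-$type families (powers of $\e$, and the $\e^p/\e_p$, $\e_p^p/\e$ type quotients) accounting for the $-2\operatorname{d}(N)+2^{\omega(N)}+1$ adjustment, using Corollary~2 in \cite{B-three} and Lemma~\ref{10.09.2015.1} to identify them.

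The main obstacle is the bookkeeping in $(b)$: turning the soft statement ``$\cl{C}_N$ is a Kronecker product of explicit rank-two-ish cones and its Hilbert basis is confined to a small box'' into the \emph{exact} closed form $\varOmega(N)$, including getting the sign and size of every lower-order term right, will require a careful Ehrhart-style count of $\cl{C}_N\cap(\text{box})$ and a matching over-count of the decomposable points; by contrast $(a)$ is essentially a determinant/denominator estimate on the Kronecker factorization of the matrix in (\ref{r1}), and $(c)$ is pure bookkeeping once $(b)$ is in hand.
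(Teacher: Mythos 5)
Your proposal follows essentially the same route as the paper: the paper realizes the non-factorizable holomorphic eta quotients as indecomposable lattice points of the simplicial cone $\{X\in\Z^{\D}: A_NX\geq0\}$, confines them to the half-open fundamental parallelepiped $B_N\cdot[0,1)^{\D}$ spanned by the denominator-cleared columns of $A_N^{-1}$ together with those columns themselves, and exploits the Kronecker-product factorization over the prime powers $p^n\|N$ exactly as you describe --- $\kappa(N)$ arises as the sum of the column sums of $B_N$, the leading term $\varOmega'(N)=\det B_N$ counts the parallelepiped's lattice points, the $1/\operatorname{d}(N)!$ and lower-order terms come from a simplex of visibly factorizable nonnegative points, and part $(c)$ subtracts the lower-level rescalings of $\eta$ and the columns of $B_N$ whose level properly divides $N$. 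The only substantive point your sketch leaves implicit is the strictness in $(a)$: the unique weight-maximizer in the closed parallelepiped is the sum of all columns of $B_N$, which is itself factorizable, whence the strict inequality.
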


In particular, since any irreducible holomorphic eta quotient is quasi-irreducible, from the above theorem we conclude: 
\begin{co}For all $N\in\N$, 
the following assertions hold:
\begin{itemize}
\item[$(a)$] The weight of any irreducible holomorphic eta quotient of level $N$\hspace{.6pt} is 
less than $\kappa(N)/2$.
\item[$(b)$] The number of irreducible holomorphic eta quotients of level~$N$ is 
bounded above by
$\varOmega_0(N)$.\qed
\end{itemize}
 \label{b11}
\end{co}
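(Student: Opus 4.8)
\textbf{Proof plan for Corollary~\ref{b11}.}
The plan is to derive Corollary~\ref{b11} as an immediate consequence of Theorem~\ref{b1} by exploiting the containment of classes: every \emph{irreducible} holomorphic eta quotient of level $N$ is, by definition, not factorizable on $\Gm_0(M)$ for \emph{any} multiple $M$ of $N$, and in particular it is not factorizable on $\Gm_0(N)$ itself. Hence the set of irreducible holomorphic eta quotients of level $N$ is a subset of the set of holomorphic eta quotients on $\Gm_0(N)$ that are not factorizable on $\Gm_0(N)$. A subtlety to flag is that an irreducible eta quotient of level $N$ has level exactly $N$ (not merely dividing $N$), whereas part $(a)$ of Theorem~\ref{b1} bounds the weight of any holomorphic eta quotient \emph{on} $\Gm_0(N)$ that is not factorizable on $\Gm_0(N)$; since level $N$ implies level dividing $N$, the inclusion still goes the right way, so no harm is done.

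For part $(a)$: let $f$ be an irreducible holomorphic eta quotient of level $N$. Then $f$ is a holomorphic eta quotient on $\Gm_0(N)$ and is not factorizable on $\Gm_0(N)$, so Theorem~\ref{b1}$(a)$ applies verbatim and gives that the weight of $f$ is less than $\kappa(N)/2$. There is nothing further to prove here; the content is entirely in Theorem~\ref{b1}.

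For part $(b)$: here I would \emph{not} simply quote Theorem~\ref{b1}$(b)$, since that would only give the bound $\varOmega(N)$, whereas the claimed bound $\varOmega_0(N)=\varOmega(N)-2\operatorname{d}(N)+2^{\omega(N)}+1$ is sharper. Instead I would invoke Theorem~\ref{b1}$(c)$: there are at most $\varOmega_0(N)$ quasi-irreducible holomorphic eta quotients of level $N$. Then I use the observation already made in the excerpt (``since every irreducible holomorphic eta quotient is quasi-irreducible'') — an irreducible holomorphic eta quotient of level $N$ is in particular not factorizable on $\Gm_0(N)$, which is exactly the definition of quasi-irreducible of level $N$. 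Therefore the irreducible holomorphic eta quotients of level $N$ form a subset of the quasi-irreducible ones of level $N$, and the count is bounded by $\varOmega_0(N)$. This gives the ``in particular'' phrasing in the excerpt precisely: Corollary~\ref{b11} follows from Theorem~\ref{b1} together with the trivial implication irreducible $\Rightarrow$ quasi-irreducible.

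The main (and only) obstacle is a bookkeeping one: making sure the definitions line up so that ``irreducible of level $N$'' genuinely implies ``not factorizable on $\Gm_0(N)$'' and ``quasi-irreducible of level $N$,'' and that passing to a subset preserves both the weight bound and the cardinality bound. Since all the analytic work — the valence-formula estimate behind $\kappa(N)/2$ in part $(a)$ and the combinatorial enumeration behind $\varOmega(N)$ and $\varOmega_0(N)$ in parts $(b)$ and $(c)$ — is already carried out in Theorem~\ref{b1}, the corollary is proved by these inclusions alone, and the proof ends with \qed as indicated.
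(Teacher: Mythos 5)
Your proposal is correct and matches the paper's own argument: the corollary is stated as an immediate consequence of Theorem~\ref{b1}, with part $(a)$ following from Theorem~\ref{b1}$(a)$ because an irreducible eta quotient of level $N$ is in particular not factorizable on $\Gm_0(N)$, and part $(b)$ following from Theorem~\ref{b1}$(c)$ via the inclusion of irreducibles among quasi-irreducibles. Your extra care in noting that part $(b)$ must cite Theorem~\ref{b1}$(c)$ rather than $(b)$ to get the sharper bound $\varOmega_0(N)$ is exactly right.
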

 
In fact, $\kappa(N)/2$ is 
the 
smallest possible weight
for an eta quotient $f$ such that $f/g$ is holomorphic
for all 
holomorphic eta quotients $g$ which are 
not factorizable on $\Gm_0(N)$:
\begin{theorem}
For all $N\in\N$, there exists a 
holomorphic eta quotient $F_N$ of weight\hspace{1.5pt} $\kappa(N)/2$ on $\Gamma_0(N)$ such that 
a holomorphic eta quotient $h$ on $\Gamma_0(N)$ is divisible by $F_N$
if and only if  $h$ is divisible by all 
the holomorphic
eta quotients on $\GN$  
which are 
not factorizable on $\Gamma_0(N)$.
\label{17.10Sept}\end{theorem}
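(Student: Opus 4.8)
The plan is to construct $F_N$ explicitly as a suitable "least common multiple" of all the holomorphic eta quotients on $\Gamma_0(N)$ that are not factorizable on $\Gamma_0(N)$, and then to identify its weight with $\kappa(N)/2$. Recall from Theorem~\ref{b1}(b) that there are only finitely many such eta quotients, say $g_1,\dots,g_r$; write $g_i=\prod_{d\mid N}\eta_d^{X^{(i)}_d}$. The natural candidate is
\begin{equation}
F_N:=\prod_{d\mid N}\eta_d^{\,Y_d},\qquad Y_d:=\max_i X^{(i)}_d,
\end{equation}
so that by construction each $F_N/g_i$ is again an eta quotient on $\Gamma_0(N)$; one must first check it is \emph{holomorphic}. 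For this I would invoke the order-at-cusps inequalities (the formula referenced via (\ref{28.04}) together with the matrix in (\ref{r1})): the order of $F_N$ at a cusp $c$ of $\Gamma_0(N)$ is a positive-coefficient linear functional $\ell_c$ applied to the exponent vector, and $\ell_c(Y)\geq\ell_c(X^{(i)})\geq 0$ is \emph{not} automatic from $Y_d=\max_i X^{(i)}_d$ termwise unless the functionals have nonnegative coefficients — which is exactly the content of those order formulas after clearing denominators. So holomorphy of $F_N$ follows from the holomorphy of the $g_i$ together with monotonicity of $\ell_c$ in each coordinate. The "if and only if" is then nearly formal: if $h=\prod\eta_d^{Z_d}$ is divisible by $F_N$ then $h/F_N$ is a holomorphic eta quotient on $\Gamma_0(N)$, hence $h=(h/F_N)\cdot F_N=(h/F_N)\cdot\prod(F_N/g_i\text{ partial})\cdots$ — more directly, $h/g_i=(h/F_N)\cdot(F_N/g_i)$ is a product of two holomorphic eta quotients on $\Gamma_0(N)$, so $h$ is divisible by each $g_i$; conversely, divisibility by every $g_i$ forces $Z_d\geq X^{(i)}_d$ for all $i$ (again using the order functionals to rule out cancellation), hence $Z_d\geq Y_d$, i.e. $F_N\mid h$.

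The substantive point is the weight computation: $\mathrm{wt}(F_N)=\tfrac12\sum_{d\mid N}Y_d$ must equal $\kappa(N)/2$, with $\kappa(N)=\varphi(\mathrm{rad}(N))\prod_{p^n\|N}\big((n-1)(p-1)+2\big)$. I would not try to enumerate the $g_i$ directly; instead I expect the argument parallels the proof of Theorem~\ref{b1}(a), which already shows every non-factorizable holomorphic eta quotient on $\Gamma_0(N)$ has weight $<\kappa(N)/2$ and presumably proceeds by pinning down, prime-by-prime, how large each exponent $X^{(i)}_d$ can be while keeping $g_i$ holomorphic and non-factorizable. The maxima $Y_d$ should be attained by certain extremal "building-block" eta quotients (for $N=p$ these are $\eta^p/\eta_p$ and $\eta_p^p/\eta$, cf. the discussion after Lemma~\ref{10.09.2015.1}), and the product formula for $\kappa$ suggests $F_N$ decomposes multiplicatively over the prime-power components $p^n\|N$, with the factor $(n-1)(p-1)+2$ counting the contribution of the divisors $p^j m$ ($0\le j\le n$, $m\mid N/p^n$) to $\sum Y_d$. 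So the key steps are: (i) identify, for each divisor $d$, a non-factorizable holomorphic eta quotient on $\Gamma_0(N)$ whose $\eta_d$-exponent equals the value forced by the bound in Theorem~\ref{b1}(a)'s proof, thereby showing $Y_d$ is as large as that bound permits; (ii) sum these to get $\sum Y_d=\kappa(N)$; (iii) conclude $F_N$ has weight exactly $\kappa(N)/2$ and is holomorphic on $\Gamma_0(N)$.

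The main obstacle I anticipate is step (i): showing the coordinatewise maxima $Y_d$ are \emph{simultaneously} realizable in a way that sums to exactly $\kappa(N)$. A priori the eta quotient maximizing the exponent at one divisor $d$ need not maximize it at another, and the holomorphy constraints couple the coordinates through the cusp-order functionals, so one cannot just add up separate maxima unless the extremal eta quotients are compatible. I would handle this by exhibiting the $g_i$ concretely enough — likely as translates/rescalings of the prime-level extremal quotients $\eta^p/\eta_p$, $\eta_p^p/\eta$ attached to each prime $p\mid N$ and each $m\mid N/p^{\mathrm{ord}_p N}$, together with the weight-$1/2$ quotients $\eta_d$ — and checking that their coordinatewise maximum $F_N$ is exactly the "universal denominator" predicted by $\kappa$. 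A secondary technical point is confirming $F_N$ genuinely has level $N$ (not a proper divisor), which follows as long as some $Y_d>0$ for $d$ with $\mathrm{lcm}$ of the support equal to $N$ — immediate since $\eta_N$ itself is among the $g_i$ when $N$ is not $1$, forcing $Y_N\ge 1$.
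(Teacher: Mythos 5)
There is a genuine gap: your candidate $F_N$ (the coordinatewise maximum $Y_d=\max_i X^{(i)}_d$ of the exponent vectors) is the wrong ``least common multiple,'' and the ``only if'' direction of your argument fails. Divisibility of $h=\e^Z$ by $g=\e^X$ means that $h/g$ has nonnegative order at every cusp, i.e.\ $A_N(Z-X)\geq 0$ in the notation of (\ref{28.04}); it does \emph{not} mean $Z\geq X$ coordinatewise (e.g.\ $\Delta=\e^{24}$ is divisible by $\e^8\e_2^8$ on $\Gm_0(2)$, see (\ref{11.09.2015})). So the step ``divisibility by every $g_i$ forces $Z_d\geq X^{(i)}_d$, hence $Z\geq Y$'' is not a technicality to be handled by the order functionals --- it is false, and the construction cannot be repaired: already for $N=p$ prime the nonconstant holomorphic eta quotients not factorizable on $\Gm_0(p)$ are $\e,\ \e_p,\ \e^p/\e_p,\ \e_p^p/\e$, your recipe gives $F_p=\e^p\e_p^p$ of weight $p$ (not $\kappa(p)/2=p-1$), and $h=\e^{p-1}\e_p^{p-1}$ is divisible by all four quotients but not by $\e^p\e_p^p$. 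The correct ``lcm'' must realize the coordinatewise maximum of the \emph{cusp-order} vectors $A_NX^{(i)}$, not of the exponent vectors; since the order map has both signs in its inverse, these are genuinely different objects.

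That is what the paper does: it sets $F_N:=\prod_{t\in\D}\e^{B_N(\un\hs,t)}$, where $B_N(\un\hs,t)=m_{t,N}\,A_N^{-1}(\un\hs,t)$ is the exponent vector of the minimal holomorphic eta quotient vanishing only at the cusps above $1/t$. Lemmas~\ref{b2} and~\ref{08.09.2015A} show that every non-factorizable $g$ has exponent vector in $B_N\cdot[0,1]^{\D}$, whence $F_N/g$ is holomorphic (the ``if'' direction); Lemma~\ref{waybackhome} shows each $\e^{B_N(\un\hs,t)}$ is itself non-factorizable, so divisibility of $h$ by all non-factorizable quotients forces $\ord_{1/t}(h;\GN)\geq\ord_{1/t}(F_N;\GN)$ for every $t$ and hence $F_N\mid h$ (the ``only if'' direction); and the weight $\kappa(N)/2$ falls out of the column-sum computation (\ref{30.08.2015})--(\ref{30.08.2015.C}) already carried out for Theorem~\ref{b1}(a), with no need to resolve your ``simultaneous realizability'' worry, because $F_N$ is a product of explicit extremal quotients rather than a termwise maximum. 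Your instinct that the building blocks are indexed by divisors and behave multiplicatively over prime powers is right, but the blocks are the columns of $B_N$, not the maxima $Y_d$.
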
 

 In the above 
 theorem,
 the uniqueness of the eta quotient $F_N$ readily follows from the claim. 
 We shall see 
 $F_N$ explicitly in (\ref{08.09.2015}).
 We recall that the Reducibility Conjecture (see Conjecture~$1'$ in \cite{B-three}) states: 
 \emph{Every quasi-irreducible holomorphic eta quotient is irreducible}.
 Since holomorphic eta quotients on $\GN$ which are not factorizable on $\GN$ are in particular
 quasi-irreducible,  it follows from the above theorem that 
 
 \begin{co}
If the Reducibility Conjecture $($Conjecture~$1'$ in \cite{B-three}$)$ holds, then for all $N\in\N$,
there exists a 
holomorphic eta quotient $F_N$ of weight\hspace{1.5pt} $\kappa(N)/2$ on  $\Gamma_0(N)$ such that 
a holomorphic eta quotient $h$ on $\Gamma_0(N)$ is divisible by $F_N$
if and only if  $h$ is divisible by all 
the irreducible holomorphic eta quotients on $\GN$. 
\label{17.10Sept.A}\qed\end{co}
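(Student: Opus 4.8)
The plan is to deduce this corollary directly from Theorem~\ref{17.10Sept}, taking for $F_N$ the very same holomorphic eta quotient of weight $\kappa(N)/2$ on $\Gamma_0(N)$ produced there. By that theorem, for a holomorphic eta quotient $h$ on $\Gamma_0(N)$ one has: $h$ is divisible by $F_N$ if and only if $h$ is divisible by every holomorphic eta quotient on $\GN$ which is not factorizable on $\Gamma_0(N)$. Hence it suffices to show that, \emph{assuming the Reducibility Conjecture}, the set of holomorphic eta quotients on $\GN$ which are not factorizable on $\Gamma_0(N)$ coincides with the set of irreducible holomorphic eta quotients on $\GN$; once these two sets are identified, divisibility by every member of one is the same condition as divisibility by every member of the other, and the claimed equivalence follows verbatim.

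First, note that \emph{unconditionally} every irreducible holomorphic eta quotient $f$ on $\GN$ is not factorizable on $\Gamma_0(N)$: if $N'\idi N$ denotes the level of $f$, then by definition of irreducibility $f$ is not factorizable on $\Gamma_0(M)$ for \emph{every} multiple $M$ of $N'$, and $N$ is one such multiple. For the converse direction, suppose $f$ is a holomorphic eta quotient on $\GN$ which is not factorizable on $\Gamma_0(N)$, and let $N'\idi N$ be its level. Since every holomorphic eta quotient on $\Gamma_0(N')$ is in particular one on $\Gamma_0(N)$, any factorization of $f$ into nonconstant holomorphic eta quotients on $\Gamma_0(N')$ would also be a factorization on $\Gamma_0(N)$; hence $f$ is not factorizable on $\Gamma_0(N')$, i.e.\ $f$ is quasi-irreducible. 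The Reducibility Conjecture then forces $f$ to be irreducible. Combining the two inclusions, the two sets are equal, and the corollary follows.

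The argument is short, and the only point that needs care is the bookkeeping of levels: an eta quotient ``on $\GN$'' may have level a proper divisor $N'$ of $N$, so in the second inclusion one must apply the Reducibility Conjecture to $f$ viewed as a quasi-irreducible eta quotient of level $N'$, not of level $N$. There is no genuine obstacle here; the substance of the statement already resides in Theorem~\ref{17.10Sept}, and this corollary merely reinterprets its conclusion under the extra hypothesis that quasi-irreducibility implies irreducibility.
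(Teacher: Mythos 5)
Your proposal is correct and follows essentially the same route as the paper: the paper likewise takes $F_N$ from Theorem~\ref{17.10Sept} and observes that holomorphic eta quotients on $\GN$ which are not factorizable on $\GN$ are in particular quasi-irreducible, hence irreducible under the Reducibility Conjecture. Your write-up merely spells out both inclusions (including the unconditional one, that irreducibles on $\GN$ are not factorizable on $\GN$) and the level bookkeeping, which the paper leaves implicit.
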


We shall also show that
\begin{theorem}
For $N\in\N$ and for any divisor $t$ of $N/\rad(N)$, there 
is  an irreducible holomorphic eta quotient of 
weight\label{10.09.2015}
 $$\frac12\varphi(\rad(N))\hspace{1.5pt}\varphi(\rad(\gcd(t,N/t)))$$
 on $\GN$. In particular, for $t=N/\rad(N)$, 
there exists  an irreducible holomorphic eta quotient of  level $N$ and
of the weight as above.
\end{theorem}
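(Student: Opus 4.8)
Write $N=\prod_{i=1}^{r}p_i^{n_i}$ and $t=\prod_{i=1}^{r}p_i^{s_i}$; since $t\mid N/\rad(N)$ we have $0\le s_i\le n_i-1$ for every $i$. The plan is to attach to each $p_i$ a ``local'' holomorphic eta quotient $g_i$ on $\Gamma_0(p_i^{n_i})$ and then to glue these together multiplicatively on their exponent vectors. For $s_i=0$ put $g_i:=\eta^{p_i}/\eta_{p_i}$; by~(\ref{27.04}) it is holomorphic of level $p_i$, hence an eta quotient on $\Gamma_0(p_i^{n_i})$. For $1\le s_i\le n_i-1$ let $g_i$ be the primitive holomorphic eta quotient of level $p_i^{n_i}$ whose order of vanishing is positive at the cusps of $\Gamma_0(p_i^{n_i})$ of denominator $p_i^{s_i}$ and zero at every other cusp; such an eta quotient exists since the order at a cusp is a linear functional of the exponent vector, so one only has to solve a system of linear equations and inequalities, and for $s_i=n_i-1$ one may take explicitly
\[
g_i=\frac{\eta_{p_i^{n_i-1}}^{p_i^{2}+1}}{\eta_{p_i^{n_i-2}}^{p_i}\,\eta_{p_i^{n_i}}^{p_i}}.
\]
Writing $g_i=\prod_{d\mid p_i^{n_i}}\eta_d^{Y^{(i)}_d}$ and using that every divisor of $N$ is uniquely a product $d_1\cdots d_r$ with $d_i\mid p_i^{n_i}$, I define $F_{N,t}$ to be the eta quotient $\prod_{d_1,\dots,d_r}\eta_{d_1\cdots d_r}^{Y^{(1)}_{d_1}\cdots Y^{(r)}_{d_r}}$.

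Next I read off the invariants of $F_{N,t}$. Because $\gcd(c_1\cdots c_r,\,d_1\cdots d_r)=\prod_i\gcd(c_i,d_i)$ for divisors $c_i,d_i$ of the pairwise coprime $p_i^{n_i}$, the linear functional giving the order of $F_{N,t}$ at the cusps of $\Gamma_0(N)$ of denominator $c_1\cdots c_r$ is the product over $i$ of the corresponding functionals for the $g_i$. Hence $F_{N,t}$ is holomorphic on $\Gamma_0(N)$; it vanishes exactly at the cusps of denominator $t$ (the only denominator at which every local factor contributes a positive order); its level divides $N$, with equality when $t=N/\rad(N)$, because then each $g_i$ has level exactly $p_i^{n_i}$ (for $n_i=1$ this is clear, for $n_i\ge 2$ it is visible from the displayed formula); and its weight equals $\tfrac12\prod_i(\sum_d Y^{(i)}_d)=2^{r-1}\prod_i\operatorname{wt}(g_i)$. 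A direct computation of the exponent vectors gives $\operatorname{wt}(g_i)=\tfrac{p_i-1}{2}$ if $s_i=0$ and $\operatorname{wt}(g_i)=\tfrac{(p_i-1)^2}{2}$ if $s_i\ge1$; since $p_i\mid\gcd(t,N/t)$ is equivalent to $s_i\ge 1$ (the inequality $s_i\le n_i-1$ being automatic here), this gives $\operatorname{wt}(F_{N,t})=\tfrac12\,\varphi(\rad(N))\,\varphi(\rad(\gcd(t,N/t)))$, as required. No hypothesis on the multiplier system is needed: $F_{N,t}$ is a holomorphic modular form with a (possibly nontrivial) multiplier system by the above.

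It remains to show that $F_{N,t}$ is irreducible, and for this I would invoke Lemma~\ref{10.09.2015.1}. The point to exploit is that the zeros of $F_{N,t}$ are concentrated: for every multiple $M$ of $N$, the order of $F_{N,t}$ at a cusp $\frac ac$ of $\Gamma_0(M)$ is a positive multiple of its order at the cusp of $\Gamma_0(N)$ of denominator $\gcd(c,N)$, so $F_{N,t}$ vanishes on $\Gamma_0(M)$ at exactly the cusps $\frac ac$ with $\gcd(c,N)=t$. If $F_{N,t}=g\cdot h$ were a factorization into nonconstant holomorphic eta quotients on some $\Gamma_0(M)$, then at every cusp where $F_{N,t}$ is nonvanishing both $g$ and $h$ would be nonvanishing, so each of $g,h$ would vanish only at cusps $\frac ac$ with $\gcd(c,N)=t$; Lemma~\ref{10.09.2015.1} then restricts such eta quotients strongly enough --- exactly as in the proof that $\eta^{p}/\eta_{p}$ and $\eta_{p}^{p}/\eta$ are irreducible (Corollary~2 in \cite{B-three}) --- that $g$ and $h$ cannot both have positive weight under the constraint $\operatorname{wt}(g)+\operatorname{wt}(h)=\operatorname{wt}(F_{N,t})$. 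Taking $t=N/\rad(N)$ then produces an irreducible holomorphic eta quotient of level exactly $N$. I expect this last step to be the main obstacle: a factorization over a proper multiple $M$ of $N$ may involve primes dividing $M$ but not $N$, or higher prime powers, so the factors need not have level dividing $N$, and ruling these out requires the full strength of the description in Lemma~\ref{10.09.2015.1} of the holomorphic eta quotients whose zeros all lie over a single cusp of $\Gamma_0(N)$.
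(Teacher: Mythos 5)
Your construction and weight computation are correct and are in fact identical to the paper's: unwinding (\ref{28.08.2015}) and (\ref{31.08.2015}), your $F_{N,t}$ is exactly the eta quotient $\e^{B_N(\un\hs,t)}$, your local factors $g_i$ are the columns $B_{p_i^{n_i}}(\un\hs,p_i^{s_i})$ of (\ref{equ1}) (for $s_i=0$ the first column gives $\e^{p_i}/\e_{p_i}$, for $s_i=n_i-1$ your displayed quotient is the penultimate column), your Kronecker-product gluing is (\ref{31.08.2015}), and your evaluation of the weight reproduces (\ref{equ2}) and (\ref{30.08.2015}). The level claim for $t=N/\rad(N)$ is also right.

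The genuine gap is the irreducibility, and your sketch of it would not close. First, Lemma~\ref{10.09.2015.1} is not a ``description of the holomorphic eta quotients whose zeros all lie over a single cusp''; it is verbatim the assertion that $\e^{B_N(\un\hs,t)}$ is irreducible for $t\mid N/\rad(N)$, i.e.\ precisely the content you still owe. Second, your proposed mechanism is the wrong one: in a putative factorization $F_{N,t}=g\cdot h$ on $\Gm_0(M)$, the obstruction is not that ``$g$ and $h$ cannot both have positive weight.'' What actually happens (Lemma~\ref{waybackhome}) is that, since both factors are holomorphic and can therefore vanish only at cusps lying over $1/t$, their exponent vectors must both be positive rational multiples of the single column $A_M^{-1}(\un\hs,t)$, with multipliers summing to the minimal denominator $m_{t,M}$ of that column; hence neither multiple can be integral. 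It is integrality of the exponents, not positivity of the weights, that fails. Third, and most seriously, you have no tool for multiples $M$ of $N$ involving new primes or higher powers of primes dividing $N$ --- as you concede. The paper disposes of these by quoting Theorem~3 of \cite{B-three} (a holomorphic eta quotient on $\GN$ is reducible only if it is factorizable on some $\Gm_0(M)$ with $M$ a multiple of $N$ and $\rad(M)=\rad(N)$), and then uses the hypothesis $t\mid N/\rad(N)$ in an essential way: for such $t$ the column $B_M(\un\hs,t)$ agrees with $B_N(\un\hs,t)$ padded by zeros (this would fail if $p^n\|N$ and $p^n\mid t$, since the last column of $B_{p^n}$ is not the truncation of an interior column of $B_{p^m}$ for $m>n$), so that $F_{N,t}=\e^{B_M(\un\hs,t)}$ and Lemma~\ref{waybackhome} applies on $\Gm_0(M)$ directly. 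Without this external input and this stability of the column under $N\mapsto M$, the argument does not go through.
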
 
 
\section{Notations and the basic facts}
\label{26.08.2015.1}
For $N\in\N$, by $\D$ we denote the set of divisors of $N$.
For 
$X\in\ZD$, we define the eta quotient $\e^X$
   by
   \begin{equation}\label{3Jan15.1}
    \e^X:=\displaystyle{\prod_{d\in\D}\eta_d^{X_d}},
 \end{equation}
where $X_d$ is the 
value of $X$ 
at $d\in\D$ whereas $\e_d$ denotes the rescaling of $\e$ by $d$.
Clearly, the level of $\e^X$ divides $N$. In other words, $\e^X$ transforms like a modular form on $\GN$. 
We define the summatory function $\sigma:\ZD\rightarrow\Z$ by \begin{equation}
\sigma(X):=\sum_{d\in\D}X_d.
\label{30.08.2015.A}\end{equation}
Since $\e$ is of weight $1/2$,  
the weight of $\e^X$ is  $\sigma(X)/2$ for all $X\in\ZD$.

Recall that an eta quotient $f$ on $\GN$ is holomorphic
if it 
does not have any poles at the cusps of $\GN$. Under the action of $\GN$ on $\PQ$
by M\"obius transformation, for 
$a,b\in\Z$ with $\gcd(a,b)=1$,
we have
\begin{equation}
[a:b]\hspace{.1cm}
{{{{\sim}}}}_{\hspace*{-.05cm}{{{{\GN}}}}}
\hspace{.08cm}[a':\gcd(N,b)]\label{19.05.2015} 
\end{equation}
for some $a'\in\Z$ which is coprime to $\gcd(N,b)$ (see \cite{ds}).
We identify $\PQ$ with $\Q\cup\{\infty\}$ via the canonical bijection that maps $[\alpha:\lambda]$ to 
$\alpha/\lambda$ if $\lambda\neq0$ and to $\infty$ if $\lambda=0$. 
For $s\in\Q\cup\{\infty\}$ and a weakly holomorphic modular form $f$ on $\GN$, the order of $f$ at the cusp $s$ of $\GN$ is 
the exponent of 
 \hyperlink{queue}{$q^{{1}/{w_s}}$} 
occurring with the first nonzero coefficient in the 
$q$-expansion of $f$
at the cusp $s$,
where $w_s$ is the width of the cusp $s$ (see \cite{ds,ra}).
The following is 
the set of the equivalence classes of the cusps of $\GN$ 
(see \cite{ds,ymart}):
\begin{equation}
\cl{S}_N:=\Big{\{}\frac{a}{t}\in\Q\hspace{2.5pt}{\di}\hspace{2.5pt}t\in\cl{D}_N,\hspace{2pt} a\in\bb{Z}, 
 \hspace{2pt}\gcd(a,t)=1\Big{\}}/\sim\hspace{1.5pt},
\end{equation}
where $\dfrac{a}{t}\sim\dfrac{b}{t}$ if and only if $a\equiv b\pmod{\gcd(t,N/t)}$.
For $d\in\D$ and for $s=\dfrac{a}t\in\cl{S}_N$ with $\gcd(a,t)=1$, we have
\begin{equation}
 \ord_s(\e_d\hspace{1pt};\GN)= \frac{N\cdot\gcd(d,t)^2}{24\cdot d\cdot\gcd(t^2,N)}\in\frac1{24}\N 
\label{26.04.2015}\end{equation}
(see 
\cite{ymart}). 
It is easy to check the above inclusion  
when $N$ is a prime power. 
The general case 
follows by multiplicativity (see \ref{27.04.2015} and \ref{13May}).
It follows that for all $X\in\ZD$, we have
\begin{equation}
  \ord_s(\e^X\hspace{1pt};\GN)= \frac1{24}\sum_{\substack{d\in\D}}\frac{N\cdot\gcd(d,t)^2}{d\cdot\gcd(t^2,N)}X_d\hspace{1.5pt}. 
\label{27.04}\end{equation}
 In particular, 
that implies
\begin{equation}
 \ord_{a/t}(\e^X\hspace{1pt};\GN)=\ord_{1/t}(\e^X\hspace{1pt};\GN)
\label{27.04.2015.1}\end{equation}
for all $t\in\D$ and for all the $\varphi(\gcd(t,N/t))$ inequivalent cusps of $\GN$
represented by rational numbers
of the form $\dfrac{a}{t}\in\cl{S}_N$ with $\gcd(a,t)=1$.
Let $\psi(N)$ denote the index of 
$\GN$ in $\operatorname{SL}_2(\Z)$. Then $\psi:\N\rightarrow\N$ is 
given 
by
\begin{equation}
\psi(N):= 
N\cdot\hspace*{-.2cm}\prod_{\substack{p|N\\\text{$p$ prime}}}\hspace*{-.2cm}\left(1+\frac{1}{p}\right)
\label{14.2Sept}\end{equation}
(see \cite{ds}).
The \emph{valence formula} for $\GN$ (see \cite{be,ra}) states:
\begin{equation}
\sum_{P\in\hspace*{1pt}\GN\backslash\H}\frac1{n_P}\cdot\ord_P(f)
\hspace{2pt}+\sum_{s\in\hspace{.5pt}\cl{S}_N}\ord_{s}(f\hspace{1pt};\GN) 
=\frac{k\cdot\psi(N)}{24}\hs,\label{3.2Aug}
\end{equation}
where $k\in\Z$, $f:\H\rightarrow\C$ is a meromorphic function that transforms like a modular forms of weight
$k/2$ on $\GN$ which is also meromorphic at the cusps of $\GN$ and $n_P$ is the number of elements in the 
stabilizer of $P$ in the group $\GN/\{\pm I\}$, where $I\in\operatorname{SL}_2(\Z)$ denotes the identity matrix.
In particular, if $f$ is an eta quotient, then
from (\ref{3.2Aug}) we obtain
\begin{equation}
\sum_{s\in\hspace{.5pt}\cl{S}_N}\ord_{s}(f\hspace{1pt};\GN) 
=\frac{k\cdot\psi(N)}{24}\hs,
\label{27.04.2015.2}\end{equation}
because  eta quotients do not have poles or zeros on $\H$. 
it follows from  (\ref{27.04.2015.2})
and from 
 (\ref{27.04.2015.1}) that for an eta quotient $f$ of weight $k/2$ on $\GN$, the valence formula
 further reduces to
\begin{equation}
\sum_{t\hspace{.5pt}\idi N} 
\varphi(\gcd(t,N/t))\cdot\ord_{1/t}(f\hspace{1pt};\GN) 
=\frac{k\cdot\psi(N)}{24}\hspace{1pt}.
\label{27.04.2015.3}
\end{equation}
Since $\ord_{1/t}(f\hspace{1pt};\GN)\in\frac{1}{24}\Z$ (see \ref{26.04.2015}), from
(\ref{27.04.2015.3}) it follows that of any particular weight, there are only finitely many holomorphic eta quotients
on $\GN$. More precisely, the number of holomorphic eta quotients of weight $k/2$
on $\GN$ is at most the number of solutions of the following equation
\begin{equation}
 \sum_{t\in\D}{\varphi(\gcd(t,N/t))}\cdot x_t=k\cdot\psi(N)
\label{27.04.2015.3Z}\end{equation}
in nonnegative integers $x_t$. 

We define the \emph{\hypertarget{om}{order map}}
$\cl{O}_N:\ZD\rightarrow\frac1{24}\ZD$ of level $N$ as the map which sends $X\in\ZD$ to the ordered set of
orders of the eta quotient $\e^X$ at the cusps $\{1/t\}_{t\in\D}$ of $\GN$. Also, we define the
\emph
{order matrix} $A_N\in\Z^{\D\times\D}$ of level $N$ by 
\begin{equation}
 A_N(t,d):=24\cdot\ord_{1/t}(\e_d\hspace{1pt};\GN) 
\label{27.04.2015}\end{equation}
for all $t,d\in\D$.
 For example, for a prime power $p^n$, 
we have
\begin{equation}
A_{p^n}=\begin{pmatrix}
 \vspace{5.8pt}p^n &p^{n-1} &p^{n-2} 
&\cdots  &p &1\\
\vspace{5.8pt} p^{n-2} &p^{n-1} &p^{n-2} 
&\cdots  &p &1\\
 \vspace{5.8pt}p^{n-4} &p^{n-3} &p^{n-2} 
&\cdots  &p &1\\
\vspace{5.8pt} \vdots &\vdots &\vdots 
&\cdots &\vdots &\vdots\\
\vspace{5.8pt} 1 &p &p^2 
&\cdots &p^{n-1} &p^{n-2}\\
 1 &p &p^2 
&\cdots  &p^{n-1} &p^n
\end{pmatrix}.
 \label{23July}
\end{equation}
By linearity of the order map, we have 
\begin{equation}
\cl{O}_N(X)=\frac1{24}\cdot A_NX\hspace{1.5pt}. 
\label{28.04}\end{equation}
For $r\in\N$, if $Y,Y'\in\Z^{\D^{\hspace{.5pt}r}}$ is 
such that $Y-Y'$ is
nonnegative at each element of $\D^{\hspace{.5pt}r}$, then
we write $Y\geq Y'$. 
In particular, for $X\in\ZD$, the eta quotient $\e^X$ is holomorphic if and only if $A_NX\geq0$.

From $(\ref{27.04.2015})$ and $(\ref{26.04.2015})$, we note that $A_N(t,d)$ is multiplicative in $N, t$ and $d$.
Hence, it follows that
\begin{equation}
 A_N=\bigotimes_{\substack{ p^n\|N\\\text{$p$ prime}}}A_{p^n},
\label{13May}\end{equation}
where by \hspace{1.5pt}$\otimes$\hspace{.2pt},  we denote the Kronecker product of matrices.\footnote{Kronecker product of matrices is
not commutative. However, 
since any given ordering of the primes dividing $N$ induces a lexicographic ordering on $\cl{D}_N$ 
with which the entries of $A_N$ are indexed, 
Equation (\ref{13May}) makes sense for all possible 
orderings of the primes dividing $N$.} 

It is easy to verify that for a prime power $p^n$, 
the matrix $A_{p^n}$ is invertible with the tridiagonal inverse: 
\begin{equation}
A_{p^n}^{-1}=
\frac{1}{p^{n}\cdot(p-\frac1p)}
\begin{pmatrix}
\hspace{6pt}p &\hspace*{-6pt}-p &  
& &  & \\
\vspace{5pt}\hspace*{-1pt}-1 & \hspace*{-2pt}p^2+1 &\hspace*{-3pt}-p^2 
& &\textnormal{\Huge 0} & \\
\vspace{5pt}&\hspace*{-6pt}-p & \hspace*{-4pt}p\cdot(p^2+1) &\hspace*{-2pt}-p^3 
 &  &  \\
 &  &\hspace*{-5pt}\ddots 
 &\hspace*{-6pt}\ddots & \ddots & \\
\vspace{5pt}\hspace{2pt} &\textnormal{\Huge 0}  &  
 &\hspace*{-7pt}-p^2 & \hspace*{-4pt}p^2+1 &\hspace*{-4pt}-1\hspace{2pt}\\
\hspace{2pt} &  & 
&  &\hspace*{-6pt}-p & p\hspace{2pt}
\end{pmatrix},
 \label{r1}\end{equation}
where 
for each positive integer $j<n$, 
the nonzero entries of the column  $A_{p^n}^{-1}(\un\hs,p^j)$ are the same as 
those of the column $A_{p^n}^{-1}(\un\hs,p)$ shifted down by $j-1$ entries and
multiplied with $p^{\min\{j-1,n-j-1\}}$. 
More precisely,
\begin{align}
p^{n}\cdot(p-\frac1p)&
\cdot A_{p^n}^{-1}(p^i,p^j)\nonumber=\\&\begin{cases}
                        \ \hs\ p&\text{if $i=j=0$ or $i=j=n$}\\
                        -p^{\min\{j,n-j\}}&\text{if $|i-j|=1$}\\
                        \ \hs\ p^{\min\{j-1,n-j-1\}}\cdot(p^2+1)&\text{if $0<i=j<n$}\\
                        \ \hspace{1.2pt}\ 0&\text{otherwise.}\label{r11}
\end{cases}
\end{align}
For general $N$, the invertibility of the matrix $A_N$ now 
follows by (\ref{13May}).
Hence, any eta quotient on $\GN$ is uniquely determined by its orders at the set of the cusps
$\{1/t\}_{\substack{\ \\ t\in\D}}$ of $\GN$. In particular, for distinct $X,X'\in\ZD$, we have $\e^X\neq\e^{X'}$. 
The last statement 
is also implied by the uniqueness of $q$-series expansion:
Let $\e^{\widehat{X}}$ and $\e^{\widehat{X}'}$
be the \emph{eta products} (i.~e.  $\widehat{X}, \widehat{X}'\geq0$)
obtained by multiplying $\e^X$ and $\e^{X'}$ with a common denominator. The claim follows by induction on the weight of $\e^{\widehat{X}}$
(or equivalently, the weight of $\e^{\widehat{X}'}$)
when we compare
the corresponding 
first two exponents of $q$
occurring in the $q$-series expansions of 
$\e^{\widehat{X}}$ and $\e^{\widehat{X}'}$.

\section{
The finiteness}
\label{19Sept}
In this section, we prove the finiteness of irreducible holomorphic eta quotients of a given level 
(see 
the corollary 
of Theorem~\ref{b1}).

Let $A_N$ be the order matrix of level $N$ (see \ref{27.04.2015}).
From the invertibility of $A_N$, it follows trivially that for each $t\in\D$,
there is an eta quotient which vanishes nowhere except at the cusps
$a/t$ of $\GN$ for all integers $a$ which are coprime to $t$
(see Corollary~1.42 in the Preliminaries of \cite{B-two})
\footnote{The invertibility of the order matrix (and hence, the existence of
such eta quotients) has been known classically.
For example, see Satz~8 in \cite{c}, Proposition~3.2 in \cite{b}, the proof of Theorem~3 in \cite{km} or the proof of Theorem~2 in \cite{rw}.}. 
Let $B_N\in\Z^{\D\times\D}$ be the matrix whose columns 
are 
made of the exponents of these eta quotients. 
A little more precise description of $B_N$ is 
as follows: 
Since all the entries of $A_N^{-1}$
are rational (see \ref{13May}, \ref{r1}), for each $t\in\D$, there exists a smallest positive integer $m_{{t,N}}$ such that 
$m_{{t,N}}\cdot A_N^{-1}(\un\hs,t)$
has integer entries, where $A_N^{-1}(\un\hs,t)$ denotes the 
column of $A_N$ indexed by $t\in\D$.
We define $B_N\in\Z^{\D\times\D}$ by
\begin{equation}
B_N(\un\hs,t):=m_{{t,N}}\cdot A_N^{-1}(\un\hs,t)\hspace{4pt}\text{for all $t\in\D$.}
\label{28.08.2015}\end{equation}
Clearly, $B_N$ is invertible over $\Q$. 
Recall that 
for $X\in\ZD$, $\eta^X$ 
is holomorphic if and only if $A_NX\geq 0$  (see \ref{28.04}).
We define the eta quotient $F_N$ by
 \begin{equation}
  F_N:=\prod_{t\in\D}\e^{
  B_N(\un\hs,t)}.
 \label{08.09.2015}\end{equation}
The lemma below follows immediately:
\begin{lem}For $N\in\N$, let $F_N$ be as defined above. Then
for $X\in\ZD$, both of the eta quotients $f:=\e^X$ and $F_N/f$ are holomorphic if and only if
$X\in B_N\cdot[0,1]^{\D}$. \label{08.09.2015A} 
\qed
\end{lem}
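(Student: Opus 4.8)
The plan is to unwind both holomorphy conditions into inequalities involving the order matrix $A_N$ and then exploit the fact that $A_NB_N$ is diagonal with strictly positive diagonal entries.

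First I would rewrite $F_N$ and $F_N/f$ in terms of exponent vectors. A product of eta quotients has exponent vector equal to the sum of the exponent vectors of its factors, so the definition (\ref{08.09.2015}) yields $F_N=\e^{B_N\mathbf 1}$, where $\mathbf 1\in\ZD$ denotes the all-ones vector, since $\sum_{t\in\D}B_N(\un\hs,t)=B_N\mathbf 1$. Consequently, for $f=\e^X$ we have $F_N/f=\e^{\hs B_N\mathbf 1-X}$, and by the holomorphy criterion recorded just after (\ref{28.04}) the statement ``both $f$ and $F_N/f$ are holomorphic'' is equivalent to the pair of inequalities $A_NX\ge0$ and $A_N(B_N\mathbf 1-X)\ge0$.

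Next I would use (\ref{28.08.2015}) to record the identity $A_NB_N=D$, where $D\in\Z^{\D\times\D}$ is the diagonal matrix whose $(t,t)$-entry is $m_{{t,N}}$: indeed the $t$-th column of $A_NB_N$ equals $m_{{t,N}}\cdot A_NA_N^{-1}(\un\hs,t)=m_{{t,N}}\hs e_t$, with $e_t$ the $t$-th standard basis vector of $\ZD$. Since $B_N$ is invertible over $\Q$, every $X\in\ZD$ can be written as $X=B_Nc$ for the unique vector $c:=B_N^{-1}X\in\Q^{\D}$, and then $A_NX=Dc$ while $A_N(B_N\mathbf 1-X)=D(\mathbf 1-c)$. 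Because each $m_{{t,N}}$ is a positive integer, $Dc\ge0$ holds if and only if $c\ge0$, and $D(\mathbf 1-c)\ge0$ holds if and only if $c\le\mathbf 1$; hence the two holomorphy conditions together are equivalent to $c\in[0,1]^{\D}$, that is, to $X\in B_N\cdot[0,1]^{\D}$, which is exactly the assertion of the lemma.

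I do not expect a genuine obstacle here — this is essentially the ``follows immediately'' that precedes the lemma. The only two points that deserve a moment's care are that the exponent vector of the product $F_N$ is precisely $B_N\mathbf 1$ (this is what brings in the all-ones vector, and with it the cube $[0,1]^{\D}$) and that $A_NB_N$ is not merely invertible but diagonal with strictly positive entries, which is what allows the image $B_N\cdot[0,1]^{\D}$ to be cut out cleanly by the two holomorphy inequalities rather than by a sheared region.
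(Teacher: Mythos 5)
Your argument is correct and is precisely the computation the paper leaves implicit when it states that the lemma ``follows immediately'': holomorphy of $f$ and $F_N/f$ unwinds to $A_NX\ge0$ and $A_N(B_N\mathbf 1-X)\ge0$, and since $A_NB_N$ is diagonal with positive entries $m_{t,N}$ these conditions are exactly $B_N^{-1}X\in[0,1]^{\D}$. No discrepancy with the paper's approach.
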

Let $X\in\ZD\smallsetminus\{0\}$ be such that $\e^X$ is a holomorphic eta quotient which is not factorizable on $\GN$.
Define $Y\in\ZD$ by $Y:=B_N^{-1}X$. Suppose, for some $t\in\D$, we have $Y_{t}\geq1$. 
Then $\e^X$ is divisible by the nonconstant holomorphic eta quotient $\e^{B_N(\un\hs,t)}$.
Since $\e^X$ is not 
factorizable on $\GN$,
we conclude that 
$X=B_N(\un\hs,t)$. 
Thus, we have proved that

\begin{lem}For $N\in\N$, let $B_N\in\Z^{\D\times\D}$ be as defined in $(\ref{28.08.2015})$. 
For $X\in\ZD$, if $\e^X$ is a holomorphic eta quotient which is not factorizable on $\GN$,
then either $X\in B_N\cdot[0,1)^{\D}$ or $X=B_N(\un\hs,t)$ for some $t\in\D$.\qed
\label{b2}\end{lem}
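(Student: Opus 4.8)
The plan is to linearize both the holomorphy constraint and the divisibility constraints by passing to the coordinates $Y:=B_N^{-1}X$, exploiting that the columns of $B_N$ are rational multiples of the columns of $A_N^{-1}$, so that $A_NB_N$ is diagonal.

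First I would record the identity that drives everything. By $(\ref{28.08.2015})$ we have $B_N(\un\hs,t)=m_{t,N}\cdot A_N^{-1}(\un\hs,t)$ for each $t\in\D$, so $A_NB_N=\operatorname{diag}\big((m_{t,N})_{t\in\D}\big)$. Hence, writing $Y:=B_N^{-1}X$ — a rational vector, not necessarily integral, which will turn out to be harmless — one gets $(A_NX)_t=m_{t,N}\,Y_t$ for every $t\in\D$, i.e. $24\,\ord_{1/t}(\e^X;\GN)=m_{t,N}\,Y_t$ by $(\ref{28.04})$. Since every $m_{t,N}$ is a positive integer, this yields two facts simultaneously: (i) $\e^X$ is holomorphic (equivalently $A_NX\geq0$) if and only if $Y\geq0$; and (ii) for a fixed $t\in\D$, the vector $A_N\big(X-B_N(\un\hs,t)\big)$ is $A_NX$ with its $t$-th coordinate lowered by $m_{t,N}$, so if $\e^X$ is holomorphic then it is divisible by $\e^{B_N(\un\hs,t)}$ if and only if $Y_t\geq1$.

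Next I would invoke non-factorizability. Each $\e^{B_N(\un\hs,t)}$ is a holomorphic eta quotient on $\GN$ by construction, and it is nonconstant, since it vanishes at the cusp $1/t$ to the positive order $m_{t,N}/24$. Now assume $\e^X$ is holomorphic and not factorizable on $\GN$. If $Y_t\geq1$ for some $t\in\D$, then by (ii) the identity $\e^X=\e^{B_N(\un\hs,t)}\cdot\e^{X-B_N(\un\hs,t)}$ exhibits $\e^X$ as a product of two holomorphic eta quotients with the first nonconstant; non-factorizability forces the second to be constant, i.e. $\e^{X-B_N(\un\hs,t)}=1$, and therefore $X=B_N(\un\hs,t)$. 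In the complementary case, $Y_t<1$ for every $t\in\D$; together with $Y\geq0$ from (i) this gives $Y\in[0,1)^{\D}$, hence $X=B_NY\in B_N\cdot[0,1)^{\D}$. These two cases are exactly the alternatives asserted, which finishes the argument — and, in view of Lemma~\ref{08.09.2015A}, it also identifies the first alternative as ``$\e^X$ divides $F_N$ and is divisible by none of the $\e^{B_N(\un\hs,t)}$''.

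I do not expect a genuine obstacle: the real content — invertibility of $A_N$ and rationality of $A_N^{-1}$ — is already in hand from Section~\ref{26.08.2015.1}, and what remains is bookkeeping around the diagonalization $A_NB_N=\operatorname{diag}(m_{t,N})$. The two places I would be careful about are: (1) confirming that each column eta quotient $\e^{B_N(\un\hs,t)}$ is truly nonconstant, so that peeling it off is a legitimate factorization in the sense of Section~\ref{intro}; and (2) not treating $Y=B_N^{-1}X$ as an integer vector — it need not be, but only its non-negativity, the bound $Y_t<1$, and the relation $B_NY=X$ are used, all of which make sense over $\Q$.
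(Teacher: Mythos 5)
Your argument is correct and is essentially the paper's own proof: the paper likewise sets $Y:=B_N^{-1}X$, observes that $Y_t\geq1$ forces divisibility by the nonconstant holomorphic eta quotient $\e^{B_N(\un\hs,t)}$ and hence $X=B_N(\un\hs,t)$ by non-factorizability, and otherwise places $X$ in $B_N\cdot[0,1)^{\D}$. Your write-up merely makes explicit two details the paper leaves implicit — the diagonalization $A_NB_N=\operatorname{diag}(m_{t,N})$ underlying both the holomorphy criterion $Y\geq0$ and the divisibility criterion $Y_t\geq1$, and the fact that $Y$ need only be rational — so no further changes are needed.
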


Since for $N\in\N$, there are only finitely many lattice points in the bounded 
polytope $B_N\cdot[0,1)^{\D}$\hspace*{-2pt},
from Lemma~\ref{b2}\hspace{1pt} it follows that there are only finitely many holomorphic eta quotients on $\GN$ which are not 
factorizable on $\GN$\hspace{1pt} (e.~g.  the irreducible holomorphic eta quotients whose levels divide $N$).

\ \newline
\textit{Proof of Theorem \ref{b1}.(a).} 
Let 
$f$ be 
a holomorphic eta quotient on $\GN$ which is not factorizable on $\GN$.
From the above lemma, we see that the weight of $f$ 
is at most equal to the maximum value of $ 
\sigma(X)/2
$, where $X$ varies over $B_N\cdot[0,1]^{\D}$ and $\sigma$ is as defined in (\ref{30.08.2015.A}).
Since for all $t\in\D$, the sum of all the entries in the column $B_N(\un\hs,t)$ 
of $B_N$ is positive (see \ref{30.08.2015}), 
it follows that
\begin{equation}
\max_{X\in B_N\cdot[0,1]^{\D}}\sigma(X)=\sum_{t\in\D}\sigma(B_N(\un\hs,t)).
\nonumber\end{equation}
Hence, it suffices to show that
\begin{equation}
\kappa(N)=\sum_{d\in\D}\sigma(B_N(\un\hs,t)).
\label{30.08.2015.B}\end{equation}
Since for $N\in\N$ and $t\in\D$, all the entries of the columns $A_N^{-1}(\un\hs,t)$ 
are multiplicative in $N$ and $t$\hspace{1.3pt} (see \ref{13May}),
so is 
the smallest positive integer $m_{{t,N}}$ such that $m_{{t,N}}\cdot A_N^{-1}(\un\hs,t)\in\ZD$
(see Lemma~4 in \cite{B-three}).
Hence, from the multiplicativity of $A_N^{-1}(d,t)$ in $N$, $d$ and $t$  (see \ref{13May}),
it follows that $B_N(d,t)$ (see \ref{28.08.2015}) is also multiplicative in $N$, $d$ and $t$. 
That implies:
\begin{equation}
 B_N=\bigotimes_{\substack{ \hspace*{3pt}p\in{{\wp}}_{{N}}\\p^n\|N
 }}
 B_{p^n},
\label{28.08.2015.1}\end{equation}
where ${{{\wp}}_{{N}}}$ denotes the set of prime divisors of $N$.
For a prime $p$, 
from (\ref{28.08.2015}) and 
(\ref{r1}), we have 
 \begin{equation}
 B_{p^n}=
\begin{pmatrix}
\hspace{6pt}p &\hspace*{-6pt}-p &  
& &  & \\
\vspace{5pt}\hspace*{-1pt}-1 & \hspace*{-2pt}p^2+1 &\hspace*{-3pt}-p 
& &\textnormal{\Huge 0} & \\
\vspace{5pt}&\hspace*{-6pt}-p & \hspace*{-4pt}p^2+1 &\hspace*{-2pt}-p 
 &  &  \\
 &  &\hspace*{-5pt}\ddots 
 &\hspace*{-6pt}\ddots & \ddots & \\
\vspace{5pt}\hspace{2pt} &\textnormal{\Huge 0}  &  
 &\hspace*{-7pt}-p & \hspace*{-4pt}p^2+1 &\hspace*{-4pt}-1\hspace{2pt}\\
\hspace{2pt} &  & 
&  &\hspace*{-6pt}-p & p\hspace{2pt}
\end{pmatrix}.
\label{equ1}
\end{equation}
Summing up the entries of each 
column of $B_{p^n}$, we get:
\begin{equation}
\sigma(B_{p^n}(\un\hs,{p^j}))=
\begin{cases}
\hspace{4.4pt}p-1&\text{if $j=0$ or $j=n$}\\
(p-1)^2&\text{otherwise.} 
\end{cases}
\label{equ2}
\end{equation}
Since (\ref{28.08.2015.1}) implies that  
\begin{equation}
B_N(\un\hs,t)=\hspace*{-.1cm}\bigotimes_{\substack{ \hspace*{7pt}p\in{{\wp}}_{{N}}\\p^j\|t
 }}
 B_{p^n}(\un\hs,{p^j})\hspace{4.5pt}\text{for all $d\in\D$},
\label{31.08.2015} \end{equation}
from (\ref{equ2}) we get:
\begin{align}
\sigma(B_N(\un\hs,t))&=\prod_{\substack{ \hspace*{7pt}p\in{{\wp}}_{{N}}\\p^j\|t
 }}
 \sigma(B_{p^n}(\un\hs,{p^j}))
\label{30.08.2015}\\ 
 &=
 \Big{(}\hspace*{-.45cm}\prod_{\substack{ \hspace*{6pt}p\in{{\wp}}_{{N}}\\p\nidi\gcd(t,N/t)
 }}\hspace*{-.3cm}(p-1)\Big{)}\hspace{2.5pt}\cdot\hspace*{-.45cm}\prod_{\substack{ \hspace*{6pt}p\in{{\wp}}_{{N}}\\p\idi\gcd(t,N/t)
 }}\hspace*{-.3cm}(p-1)^2\nonumber\\
 \nonumber\\
 &=\varphi(\rad(N))\cdot\varphi(\rad(\gcd(t,N/t))). \nonumber
\end{align}
Since 
$ 
\varphi(\rad(\gcd(t,N/t)))$ 
is multiplicative in 
$t\in\D$, 
the summatory function
$N\mapsto\displaystyle{\sum_{t\in\D}\varphi(\rad(\gcd(t,N/t)))}$ is 
multiplicative in $N$. 
So, 
\begin{align}
\sum_{t\in\D}\varphi(\rad(\gcd(t,N/t)))&=\prod_{\substack{ \hspace*{3pt}p\in{{\wp}}_{{N}}\\p^n\|N
 }}\sum_{j=0}^n\varphi(\rad(p^{\min\{j,n-j\}}))\label{30.08.2015.C}\\
 &=\prod_{\substack{ \hspace*{3pt}p\in{{\wp}}_{{N}}\\p^n\|N
 }}((n-1)(p-1)+2).\nonumber
\end{align}
Now, (\ref{30.08.2015.B}) follows from (\ref{30.08.2015}) and (\ref{30.08.2015.C}). 
\vspace{2pt}

The only $X\in B_N\cdot[0,1]^{\D}$ with $\sigma(X)=\kappa(N)$ is $X=\sum_{t\in\D}\hspace*{-.1cm}B_N(\un\hs,t)$.
Since $N>1$,  it follows trivially 
from Lemma~\ref{b2}, that for such an $X$, the holomorphic eta quotient
$\e^X$ is factorizable on $\GN$. 
\qed

\ \newline
\textit{Proof of Theorem \ref{b1}.(b).}  
In Lemma~\ref{b2}, we saw that each holomorphic eta quotient which is not factorizable on $\Gm_0(N)$
correspond either to a column of $B_N$ or to a 
lattice point in the fundamental parallelepiped $B_N\cdot[0,1)^{\D}$. 
Clearly, the number of the columns of 
$B_N$ is $\operatorname{d}(N)$. In Lemma~\ref{May 14,2017} below, we show that the number of
lattice points in a fundamental parallelepiped of~$B_{N}$ is
\begin{equation}
\varOmega'(N):=\prod_{\substack{p\in{{\wp}}_{{N}}\\p^n\|N}}p^{2\operatorname{d}(N)}\Big(\frac{p^2-1}{p^4}\Big)^{\operatorname{d}(N/p^n)}.
\label{omgdash}\end{equation}
However, there also exist lattice points in the fundamental parallelepiped $B_N\cdot[0,1)^{\D}$
which correspond to some holomorphic eta quotients which are factorizable on $\Gm_0(N)$. 
For example, 
if $X$ is a lattice point 
outside the unit 
sphere in $\R^{\operatorname{d}(N)}$ 
such that all its entries are nonnegative,
then $\e^X$ is clearly factorizable on~$\Gm_0(N)$.
In Lemma~\ref{May 14,2017.A} below, we show that the number of such
lattice points in $B_N\cdot[0,1)^{\D}$ 
is at least
\begin{align}
\varOmega''(N):=&\frac1{\operatorname{d}(N)!}\prod_{\substack{p^n\parallel N\\p \text{ prime}}}\frac{(p^2-1)^{\operatorname{d}(N)}}{(p+1)^{2\operatorname{d}(N/p^n)}}
+2\sum_{\substack{p\in{{\wp}}_{{N}}\\p^n\|N}}\operatorname{d}(N/p^n)\label{omg2dash}\\
&-2^{\omega(N)}(\omega(N)-1)-\operatorname{d}(N),\nonumber
\end{align}
where $\omega(N)$ denotes the number of distinct prime divisors of $N$.
Hence, we conclude that the number of holomorphic eta quotients which are not factorizable on $\Gm_0(N)$
is bounded above by 
\begin{equation}
 \varOmega(N)=\operatorname{d}(N)+\varOmega'(N)-\varOmega''(N).
\end{equation}
{\flushright\qed}

Now, we prove the lemmas which were necessary in the above proof:
\begin{lem}
There are exactly $\varOmega'(N)$ lattice points in a fundamental parallelepiped of $B_N$,
where $\varOmega':\N\rightarrow\N$ is as defined in $(\ref{omgdash})$.
\label{May 14,2017}
\end{lem}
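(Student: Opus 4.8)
The number of lattice points in a fundamental parallelepiped of an invertible integer matrix $B$ equals $|\det B|$; this is the standard fact that the covolume of the lattice $B\cdot\ZD$ inside $\R^{\D}$ is $|\det B|$, and a fundamental parallelepiped $B\cdot[0,1)^{\D}$ is a fundamental domain for the action of $B\cdot\ZD$ on $\R^{\D}$ by translation, so it contains exactly $[\ZD : B\cdot\ZD] = |\det B|$ points of $\ZD$. Hence the plan is simply to compute $|\det B_N|$ and check that it equals $\varOmega'(N)$.

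The computation of $\det B_N$ I would carry out multiplicatively. By (\ref{28.08.2015.1}), $B_N = \bigotimes_{p^n\|N} B_{p^n}$, and the determinant of a Kronecker product $\bigotimes_i M_i$ of matrices $M_i$ of sizes $s_i\times s_i$ is $\prod_i (\det M_i)^{\prod_{j\neq i} s_j}$. Here $B_{p^n}$ is $(n+1)\times(n+1)$, so $\operatorname{d}(N)=\prod_{p^n\|N}(n+1)$ and the exponent of $\det B_{p^n}$ in $\det B_N$ is $\operatorname{d}(N)/(n+1)=\operatorname{d}(N/p^n)$. Thus it remains to evaluate $\det B_{p^n}$ from the explicit tridiagonal form (\ref{equ1}). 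I would do this by the recursion for determinants of tridiagonal matrices (expansion along the last row or column), or alternatively by using $B_N(\un,t)=m_{t,N}\cdot A_N^{-1}(\un,t)$, which gives $\det B_{p^n} = \big(\prod_{j=0}^n m_{p^j,p^n}\big)\cdot\det(A_{p^n}^{-1}) = \big(\prod_{j=0}^n m_{p^j,p^n}\big)/\det A_{p^n}$; one reads off $m_{p^j,p^n}$ and $\det A_{p^n}$ from (\ref{r1}) (the denominator $p^n(p-\tfrac1p)=p^{n-1}(p^2-1)$ appears there, and the integer-clearing factors $m_{p^j,p^n}$ are powers of $p$ times that denominator divided by a gcd). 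Either route yields $\det B_{p^n}=\pm\, p^{2(n+1)}(p^2-1)/p^4$ up to checking the small cases, which matches the factor $p^{2\operatorname{d}(N)/(n+1)}\big(\tfrac{p^2-1}{p^4}\big)$ needed so that the product over $p^n\|N$ reproduces $\varOmega'(N)$ in (\ref{omgdash}).

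The only genuinely computational step — and the one most prone to slips — is pinning down $\det B_{p^n}$ (equivalently, the product $\prod_j m_{p^j,p^n}$) exactly, including the behavior at the endpoints $j=0,n$ where the column of $A_{p^n}^{-1}$ has a different shape; everything else (the Kronecker-product determinant formula, the bijection between fundamental-parallelepiped lattice points and the index $[\ZD:B_N\ZD]$) is routine. So the main obstacle is bookkeeping the exponents of $p$ and the factors $(p^2-1)$ across the tridiagonal structure of $B_{p^n}$ and confirming they assemble into $\varOmega'(N)$; once $\det B_{p^n}$ is verified in, say, the cases $n=1,2$ and then by the tridiagonal recursion in general, the lemma follows.
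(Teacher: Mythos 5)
Your proposal is correct and follows essentially the same route as the paper: reduce the lattice-point count to $|\det B_N|$, factor the determinant through the Kronecker decomposition (\ref{28.08.2015.1}) with exponents $\operatorname{d}(N/p^n)$, and evaluate $\det B_{p^n}=p^{2n-2}(p^2-1)$ from the explicit tridiagonal form (\ref{equ1}) (the paper does this last step by elementary column operations to a triangular shape rather than your recursion, a negligible difference). Your value of $\det B_{p^n}$ and the assembly into $\varOmega'(N)$ check out.
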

\begin{proof}
Since the number of integer points in a fundamental parallelepiped of a nonsingular integer matrix is
equal to the volume of the parallelepiped (see Theorem~2 in \cite{bar}),
it suffices to show that the determinant of $B_N$ is~$\varOmega'(N)$.
Indeed, for a prime number $p$ and a positive integer $n$, we have $\det(B_{p^n})=\varOmega'(p^n)$
 which follows trivially after transforming $B_{p^n}$ (see~\ref{equ1})
 to the following 
 matrix through elementary column operations 
$$\begin{tikzpicture} [baseline=(current bounding box.center)]
\matrix (m) [matrix of math nodes,nodes in empty cells,right delimiter={)},left delimiter={(}] {
p &                                  & & & & & &    &                    &                                                             \\
\hspace*{-10pt}-1   &p^2                                 & & & & & &    &                    &                           \\
   &\hspace*{-11pt}-p  &p^2 &\hspace{5pt}\color{white}{p^2} & & & &    &                    &                         &                                    \\
       &  & &\hspace*{-45pt}-p &\hspace*{-25pt}p^2&\hspace*{-10pt}\color{white}{p^2}& & & & \textnormal{\Huge 0}\\
       &&&&\hspace*{-30pt}-p
       &\hspace*{-5pt}p^2\hspace*{-2.5pt}-\hspace*{-2.5pt}1 &\hspace*{-1pt}-p & &    &                                                                                 \\
          &  &  & & & &\hspace*{7pt}p^2 &-p &\color{white}{p^2}    &                    &                                                            \\
  \textnormal{\Huge 0}&                                  & & & & & &\hspace*{7pt}p^2 &-p&\hspace{7pt}\color{white}{p^2}                                    \\
&                                 & & & & & & &\hspace*{7pt}p^2 &\hspace*{-8pt}-1                                    \\
              &                                 & & & & & & & &\hspace*{-10pt}p                                    \\};
       \hspace*{-26.5pt}\draw[decorate sep={.75pt}{3.75mm},fill] (m-3-3)-- (m-4-5);
        \draw[decorate sep={.75pt}{3mm},fill](m-3-4)-- (m-4-6);
\draw[decorate sep={.75pt}{2mm},fill](m-6-8)-- (m-7-9);
\draw[decorate sep={.75pt}{1.573mm},fill](m-6-9)-- (m-7-10);
\end{tikzpicture}
$$
and 
from the fact that for square matrices $A$ and $D$, we have 
$$\det\begin{pmatrix}A & B\\0 & D\end{pmatrix}=\det(A)\det(D).$$ 
Since
for any two matrices $A_{m\times m}$ and $B_{n\times n}$,
\begin{equation}
 \det(A\otimes B)=\det(A)^n\det(B)^m
\label{May 18, 2017}\end{equation}
(see~\cite{a}), the general case now follows 
by induction on the number of prime divisors of $N$
(see \ref{28.08.2015.1}), 
\end{proof}

\begin{lem}Let $\varOmega'':\N\rightarrow\N$ be as defined in $(\ref{omg2dash})$.
In the fundamental parallelepiped $B_N\cdot[0,1)^{\D}$,
there are at least $\varOmega''(N)$ lattice points with
nonnegative coordinates, 
none of which lies on the unit sphere in $\R^{\D}$.
\label{May 14,2017.A}
\end{lem}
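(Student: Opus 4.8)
The plan is to produce an explicit family of lattice points in the fundamental parallelepiped $B_N\cdot[0,1)^{\D}$ that have nonnegative coordinates and do not lie on the unit sphere, then count them from below. Exploiting the Kronecker decomposition $B_N=\bigotimes_{p^n\|N}B_{p^n}$ from $(\ref{28.08.2015.1})$, I would first analyze the prime-power situation. For a prime power $p^n$, a point $B_{p^n}y$ with $y\in[0,1)^{\D_{p^n}}$ is a lattice point precisely when the $p^n+1$ entries of $B_{p^n}y$ are integers; writing out the tridiagonal form $(\ref{equ1})$, the entries are $py_1-py_2$, $-y_1+(p^2+1)y_2-py_3$, etc. One checks that for suitable choices of $y$ with denominators dividing $p^2-1$ (coming from $A_{p^n}^{-1}$ having denominator $p^{n}(p-\tfrac1p)=p^{n-1}(p^2-1)$), one obtains many lattice points whose coordinates can be forced nonnegative. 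The count of \emph{all} lattice points in $B_N\cdot[0,1)^{\D}$ is $\varOmega'(N)=\det(B_N)$ by Lemma~\ref{May 14,2017}; the content of this lemma is that \emph{most} of these, namely all but a controlled exceptional set, in fact correspond to eta quotients $\e^X$ with $X\geq0$ off the unit sphere.

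The cleanest route is a bijective/containment argument: I would set up a correspondence between lattice points $X=B_Ny$ in the half-open parallelepiped and residue classes in $\ZD/B_N\ZD$, a group of order $\det B_N=\varOmega'(N)$; each class has a unique representative in $B_N\cdot[0,1)^{\D}$. Then I would identify which classes fail to give an $X$ that is both $\geq0$ and off the unit sphere. Points on the unit sphere are the $2\operatorname{d}(N)$ points $\pm e_t$ (standard basis vectors) — these contribute the $-\operatorname{d}(N)$ and part of the correction; not all of them lie in the half-open box, which is where the subtlety enters. Points with a negative coordinate are bounded above in number by estimating, for each coordinate $t\in\D$, how many residue classes force $X_t<0$; by inclusion–exclusion over the $2^{\omega(N)}$ "sign patterns" arising from the multiplicative structure $X_t = \prod_{p^j\|t}(\text{local entry})$, one gets the $-2^{\omega(N)}(\omega(N)-1)$ and $+2\sum_{p^n\|N}\operatorname{d}(N/p^n)$ terms. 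The leading term $\frac{1}{\operatorname{d}(N)!}\prod(p^2-1)^{\operatorname{d}(N)}/(p+1)^{2\operatorname{d}(N/p^n)}$ should emerge as a lower bound for the number of lattice points lying in the \emph{positive orthant} intersected with the parallelepiped: this is a volume estimate, comparing the positive part of $B_N\cdot[0,1)^{\D}$ with a simplex of the appropriate volume (hence the $1/\operatorname{d}(N)!$), and then applying Kronecker multiplicativity $(\ref{May 18, 2017})$ to reduce to the prime-power factors, whose positive-orthant volumes are computed directly from $(\ref{equ1})$.

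Concretely the steps are: (1) reduce to prime powers via $B_N=\bigotimes B_{p^n}$ and the multiplicativity of volumes/counts under Kronecker product; (2) for a single $B_{p^n}$, compute $\det B_{p^n}=\varOmega'(p^n)$ (already in Lemma~\ref{May 14,2017}) and the volume of the part of $B_{p^n}\cdot[0,1)$ lying in the nonnegative orthant, getting a lower bound of the form $(p^2-1)^{\,\cdots}/(p+1)^{\,\cdots}$ up to the simplex factor; (3) recombine to get the leading term of $\varOmega''(N)$; (4) separately account for the finitely many "bad" lattice points — those on the unit sphere or just outside the nonnegative orthant — using the explicit description of $B_{p^n}(\un,p^j)$ and inclusion–exclusion over primes to extract the lower-order corrections $+2\sum\operatorname{d}(N/p^n)-2^{\omega(N)}(\omega(N)-1)-\operatorname{d}(N)$; (5) conclude that the number of good lattice points is at least $\varOmega''(N)$.

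\textbf{Main obstacle.} The hard part is \emph{step (4)}, the careful bookkeeping of the exceptional lattice points: deciding exactly which of the $\pm e_t$ and which near-boundary points land inside the half-open parallelepiped $B_N\cdot[0,1)^{\D}$ versus on its excluded faces, and organizing the over/undercounting across the $2^{\omega(N)}$ combinations of local sign behaviors so that the inclusion–exclusion produces precisely the stated correction terms rather than merely an inequality in the wrong direction. Getting a clean \emph{lower} bound (as opposed to an asymptotic) requires being slightly wasteful in the volume estimate of step (2) while being exact in step (4), and balancing these is the delicate point; the Kronecker-product reduction, by contrast, is routine once the single-prime computation is in hand.
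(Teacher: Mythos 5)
Your main-term idea --- bounding the good lattice points from below by the lattice-point count of a simplex, with the $1/\operatorname{d}(N)!$ arising as a simplex-to-box volume ratio --- is the same device the paper uses, but the proposal is missing the one concrete fact that makes this work, and the part you yourself flag as the ``main obstacle'' (your step (4)) is organized around the wrong exceptional set. The paper first writes down $B_{p^n}^{-1}$ explicitly (see (\ref{May 16, 2017})); its diagonal entries give the axes-intercepts of $B_N\cdot[0,1]^{\D}$, and via (\ref{28.08.2015.1}) these are products of factors $p-\frac1p$ and $p^2-1$. Consequently $B_N\cdot[0,1]^{\D}$ contains the simplex $S_N$ spanned by the origin and the scaled standard basis vectors $C_N(\un\hs,t)$, where $C_N$ is the explicit diagonal matrix (\ref{May 18, 2017.B}). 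Since $S_N$ sits inside the nonnegative orthant, every lattice point of $S_N$ automatically has nonnegative coordinates, so there is nothing to estimate about ``residue classes forcing $X_t<0$'' and no inclusion--exclusion over sign patterns is needed or wanted. The leading term is then $\det(C_N)/\operatorname{d}(N)!$, obtained by comparing $S_N$ with the rectangular box $C_N\cdot[0,1)^{\D}$, whose lattice-point count equals its volume. Without the explicit inverse, your step (2) has no way to certify that the positive part of the parallelepiped contains a simplex of the required volume; a bare volume bound on a convex region does not by itself bound its lattice-point count from below.

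The correction terms are also not what you say they are. The $-\operatorname{d}(N)$ accounts for exactly the $\operatorname{d}(N)$ standard basis vectors $e_t$ (not $2\operatorname{d}(N)$ points $\pm e_t$: only the nonnegative ones are in play), which lie in $S_N\cap B_N\cdot[0,1)^{\D}$ but on the unit sphere. The terms $2\sum_{p^n\|N}\operatorname{d}(N/p^n)-2^{\omega(N)}(\omega(N)-1)$ count the nonzero \emph{vertices} $C_N(\un\hs,t)$ of $S_N$ that land inside the half-open parallelepiped rather than on its excluded closed faces; comparing (\ref{May 18, 2017.A}) with (\ref{May 18, 2017.B}) shows this happens precisely for those $t\in\D$ with $\rad(N)\nmid\gcd(t,N/t)$, and the displayed expression is simply the number of such divisors. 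These vertices are additional good lattice points on top of the $\det(C_N)/\operatorname{d}(N)!$ coming from the rest of the simplex. So the bookkeeping you anticipate as delicate is in fact a short, explicit computation about which axis points lie in $B_N\cdot[0,1)^{\D}$ --- but only once the simplex $S_N$ has been identified, which your proposal never quite does. As written, your step (4) would not produce the stated correction terms, and this is a genuine gap rather than a stylistic difference.
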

\begin{proof}
From (\ref{28.08.2015}), it follows that $B_N$ is invertible for all $N\in\N$.
For $n\in\N$ and a prime $p$, 
the matrix
$B_{p^n}$ is as 
in (\ref{equ1}).
It is easy to verify that 
\begin{equation}
B_{p^n}^{-1}=\frac{1}{p^{n}\cdot(p-\frac1p)}\begin{pmatrix}
 \vspace{5.8pt}p^n &p^{n-1} &p^{n-2} 
&\cdots  &p &1\\
\vspace{5.8pt} p^{n-2} &p^{n-1} &p^{n-2} 
&\cdots  &p &1\\
 \vspace{5.8pt}p^{n-3} &p^{n-2} &p^{n-1} 
&\cdots  &p^2 &p\\
 \vspace{5.8pt}p^{n-4} &p^{n-3} &p^{n-2} 
&\cdots  &p^3 &p^2\\
\vspace{5.8pt} \vdots &\vdots &\vdots 
&\cdots &\vdots &\vdots\\
 \vspace{5.8pt}p &p^{2} &p^{3} 
&\cdots  &p^{n-2} &p^{n-3}\\
\vspace{5.8pt} 1 &p &p^2 
&\cdots &p^{n-1} &p^{n-2}\\
 1 &p &p^2 
&\cdots  &p^{n-1} &p^n
\end{pmatrix}.
\label{May 16, 2017}\end{equation}
Clearly, the axes-intercepts of the fundamental parallelepiped $B_N\cdot[0,1]^{\D}$
is given by the reciprocals of the diagonal entries of $B_{N}^{-1}$.
Hence, from~(\ref{28.08.2015.1}) and (\ref{May 16, 2017}), it follows that 
the coordinates of the furthest points in $B_N\cdot[0,1]^{\D}$ 
on the axes of
$\R^{\D}$ are given by the columns of the matrix
\begin{equation}
 \bigotimes_{\substack{ \hspace*{3pt}p\in{{\wp}}_{{N}}\\p^n\|N
 }}\begin{pmatrix}
\hspace{6pt}p-\frac1p & &  
& &  & \\
\vspace{5pt}& \hspace*{-2pt}p^2-1 &\hspace*{-3pt} 
& &\textnormal{\Huge 0} & \\
\vspace{5pt}&\hspace*{-6pt} & \hspace*{-4pt}p^2-1 &\hspace*{-2pt}
 &  &  \\
 &  &\hspace*{-5pt} 
 &\hspace*{-6pt}\ddots &  & \\
\vspace{5pt}\hspace{2pt} &\textnormal{\Huge 0}  &  
 &\hspace*{-7pt} & \hspace*{-4pt}p^2-1 &\hspace*{-4pt}\hspace{2pt}\\
\hspace{2pt} &  & 
&  &\hspace*{-6pt} & p-\frac1p
\label{May 18, 2017.A}\end{pmatrix}. 
\end{equation}
In particular, 
$B_N\cdot[0,1]^{\D}$ contains 
the simplex $S_N$ which is the convex hull of
the origin and the points in $\R^{\D}$ 
whose coordinates are
given by the columns of the following matrix 
 \begin{equation}
 C_N=\bigotimes_{\substack{ \hspace*{3pt}p\in{{\wp}}_{{N}}\\p^n\|N
 }}\begin{pmatrix}
\hspace{6pt}p-1 & &  
& &  & \\
\vspace{5pt}& \hspace*{-2pt}p^2-1 &\hspace*{-3pt} 
& &\textnormal{\Huge 0} & \\
\vspace{5pt}&\hspace*{-6pt} & \hspace*{-4pt}p^2-1 &\hspace*{-2pt}
 &  &  \\
 &  &\hspace*{-5pt} 
 &\hspace*{-6pt}\ddots &  & \\
\vspace{5pt}\hspace{2pt} &\textnormal{\Huge 0}  &  
 &\hspace*{-7pt} & \hspace*{-4pt}p^2-1 &\hspace*{-4pt}\hspace{2pt}\\
\hspace{2pt} &  & 
&  &\hspace*{-6pt} & p-1
\label{May 18, 2017.B}\end{pmatrix}. 
\end{equation}
The number of lattice points in the $\operatorname{d}(N)$-dimensional rectangular parallelepiped $P_N:=C_N\cdot[0,1)^{\D}$
is clearly the same as its volume. i.~e. $\det(C_N)$.
Since the ratio of the volumes of $S_N$ and $P_N$ is $1/d(N)!$ (see~\cite{ps}), the simplex $S_N$
contains at least
\begin{equation}
\frac{(\det C_N)}{d(N)!} 
\label{May 19, 2017}\end{equation}
lattice points excluding 
all the vertices of $S_N$ except the origin. 
From (\ref{May 18, 2017.A}) and (\ref{May 18, 2017.B}), it follows that 
for $t\in\D$, 
$\gcd(t,N/t)$ is divisible by $\rad(N)$ if and only if
\begin{equation}
C_N(\un\hs,t)\in B_N\cdot[0,1]^{\D}\smallsetminus B_N\cdot[0,1)^{\D}.
\end{equation}
In other words, the number of nonzero vertices of $S_N$ which are contained in $B_N\cdot[0,1)^{\D}$
is the same as the number of $t\in\D$ such that $\rad(N)$
does not divide 
$\gcd(t,N/t)$.
It is easy to show that the number of such divisors of $N$
is 
\begin{equation}
2\sum_{\substack{p\in{{\wp}}_{{N}}\\p^n\|N}}\operatorname{d}(N/p^n)-2^{\omega(N)}(\omega(N)-1),
\label{May 19, 2017.A}\end{equation}
where $\omega(N)$ denotes the number of distinct prime divisors of $N$.
Again, from (\ref{May 18, 2017.B}) and (\ref{May 18, 2017}), it follows that
\begin{equation}
\det(C_N)=\prod_{\substack{p^n\parallel N\\p \text{ prime}}}\frac{(p^2-1)^{\operatorname{d}(N)}}{(p+1)^{2\operatorname{d}(N/p^n)}}
\label{May 19, 2017.B}\end{equation}
From (\ref{omg2dash}), (\ref{May 19, 2017}), (\ref{May 19, 2017.A}) and (\ref{May 19, 2017.B}), 
we obtain that there are at least 
$\varOmega''(N)+\operatorname{d}(N)$ lattice
points in $B_N\cdot[0,1)^{\D}$ with nonnegative coordinates.
However, exactly $\operatorname{d}(N)$ among these points 
lie on intersections of the unit sphere with the axes of $\R^{\D}$. 
\end{proof}

\ \newline
\textit{Proof of Theorem \ref{b1}.(c).} 
From Lemma~\ref{b2}, we recall that
for $X\in\ZD$, if $\e^X$ is a holomorphic eta quotient which is not factorizable on $\GN$,
then either $X\in B_N\cdot[0,1)^{\D}$ or $X=B_N(\un\hs,t)$ for some $t\in\D$.
The parallelepiped $B_N\cdot[0,1)^{\D}$ contains $\operatorname{d}(N)$ points which lie
at the intersections of the unit sphere with the axes of $\R^{\D}$. These points
corresponds to the rescalings of $\e$ by the divisors of $N$. In particular, these
are eta quotients of weight~$1/2$. So, each of these $\operatorname{d}(N)$ rescalings of $\e$ are irreducible,
whereas only one of them, viz.~$\e_N$ is of level $N$.

Next, we count the number of eta quotients of the form  $\e^{B_{N}(\un\hs,\hs t)}$
which are of level $N$. For a prime $p$,
from (\ref{equ1}) we see that the eta quotient $\e^{B_{p^n}(\un\hs,\hs p^j)}$ is of level $p^n$ if and only if $j\geq n-1$. 
Hence, from (\ref{31.08.2015}) it follows that for $N\in\N$, the eta quotient $\e^{B_{N}(\un\hs,\hs t)}$ is of level $N$
if and only if for each prime divisor $p$ of $N$, we have
$p^{n-1}\idi t$, where $n\in\N$ is such that $p^n\|N$. It is trivial to note that
the number of such divisors $t$ of $N$ is $2^{\omega(N)}$, where $\omega(N)$ denotes 
the number of prime divisors of $N$. 
Thus, among the $\operatorname{d}(N)$ columns of
$B_N$, only $2^{\omega(N)}$ correspond to eta quotients of level $N$.\qed

\

In the following, we provide a rather 
uncomplicated function which
dominates $\varOmega(N)$ for all $N$:

\begin{lem}
Let $\varOmega:\N\rightarrow\N$ be as defined in $(\ref{mousekokhay})$. For all $N\in\N$, we have
\label{May 15, 2017}
$$\varOmega(N)\leq\rad(N)^{2\operatorname{d}(N)}.$$
\end{lem}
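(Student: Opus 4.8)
The plan is to bound each of the three ingredients of $\varOmega(N)$ from $(\ref{mousekokhay})$ separately and then combine. Recall
$$\varOmega(N)=\operatorname{d}(N)+\varOmega'(N)-\varOmega''(N),$$
where $\varOmega'(N)=\prod_{p^n\|N}p^{2\operatorname{d}(N)}\bigl(\tfrac{p^2-1}{p^4}\bigr)^{\operatorname{d}(N/p^n)}$ is the volume of the fundamental parallelepiped of $B_N$ (Lemma~\ref{May 14,2017}), and $\varOmega''(N)$ is the nonnegative lower bound from $(\ref{omg2dash})$ for the count of ``unwanted'' lattice points. The first step is to dispense with the negative term: since $\varOmega''(N)\ge 0$ (it is a count of lattice points, hence by Lemma~\ref{May 14,2017.A} it is nonnegative), we have $\varOmega(N)\le\operatorname{d}(N)+\varOmega'(N)$, so it suffices to prove $\operatorname{d}(N)+\varOmega'(N)\le\rad(N)^{2\operatorname{d}(N)}$.

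The second step is to estimate $\varOmega'(N)$ itself. Writing $r:=\rad(N)=\prod_{p\|N}p$, we have $\varOmega'(N)=\prod_{p^n\|N}p^{2\operatorname{d}(N)-4\operatorname{d}(N/p^n)}(p^2-1)^{\operatorname{d}(N/p^n)}$. The crude but sufficient bound is $(p^2-1)^{\operatorname{d}(N/p^n)}\le p^{2\operatorname{d}(N/p^n)}$, which gives
$$\varOmega'(N)\le\prod_{p^n\|N}p^{2\operatorname{d}(N)-2\operatorname{d}(N/p^n)}\le\prod_{p^n\|N}p^{2\operatorname{d}(N)}=r^{2\operatorname{d}(N)};$$
but this only yields $\varOmega'(N)\le\rad(N)^{2\operatorname{d}(N)}$, not the strict slack needed to absorb the extra $+\operatorname{d}(N)$. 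So I would sharpen: for each prime $p\|N$ with $p^n\|N$ one has $\operatorname{d}(N/p^n)\ge 1$, hence $2\operatorname{d}(N)-2\operatorname{d}(N/p^n)\le 2\operatorname{d}(N)-2$, giving $\varOmega'(N)\le r^{2\operatorname{d}(N)}\cdot\prod_{p\|N}p^{-2}=r^{2\operatorname{d}(N)}/r^2=r^{2\operatorname{d}(N)-2}$. Then $\operatorname{d}(N)+\varOmega'(N)\le\operatorname{d}(N)+r^{2\operatorname{d}(N)-2}$, and since $\operatorname{d}(N)\le 2^{\operatorname{d}(N)}\le r^{2\operatorname{d}(N)}-r^{2\operatorname{d}(N)-2}$ whenever $r\ge 2$ and $\operatorname{d}(N)\ge 1$ (a routine elementary inequality, treating $N=1$ separately where $\varOmega(1)=1\le 1$), we conclude $\varOmega(N)\le r^{2\operatorname{d}(N)}=\rad(N)^{2\operatorname{d}(N)}$.

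The main obstacle is purely bookkeeping: the exponents $2\operatorname{d}(N)-4\operatorname{d}(N/p^n)$ can be negative when $N$ has a single prime factor (then $\operatorname{d}(N/p^n)=1$ and $\operatorname{d}(N)=n+1$, so the exponent is $2n-2\ge 0$, fine) versus when $N$ has several prime factors (then $\operatorname{d}(N/p^n)$ is comparable to $\operatorname{d}(N)$ and the exponent can genuinely be negative), so one must be careful that the factor $(p^2-1)^{\operatorname{d}(N/p^n)}<p^{2\operatorname{d}(N/p^n)}$ really does compensate. The clean way around this is to note $\varOmega'(N)$ is an integer (it is a determinant of an integer matrix, by Lemma~\ref{May 14,2017}), so one may freely replace the product of rational powers by the combined integer estimate above; the factorization $(\ref{13May})$ and the multiplicativity $(\ref{28.08.2015.1})$ of $B_N$ guarantee everything splits over primes, reducing the whole estimate to the prime-power case $\varOmega'(p^n)=p^{2(n+1)}\bigl(\tfrac{p^2-1}{p^4}\bigr)=p^{2n}(1-p^{-2})<p^{2n}$, which is manifestly $\le(\rad(p^n))^{2\operatorname{d}(p^n)}=p^{2(n+1)}$ with room to spare for the additive $\operatorname{d}(p^n)=n+1$ term. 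I would present the argument in this prime-by-prime form and then invoke multiplicativity, which keeps the computations short and transparent.
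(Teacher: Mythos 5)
Your proposal is correct and follows essentially the same route as the paper: discard the nonnegative term $\varOmega''(N)$, reduce to bounding $\varOmega'(N)+\operatorname{d}(N)$ by $\rad(N)^{2\operatorname{d}(N)}$, and verify that $\varOmega'(N)$ sits comfortably below $\rad(N)^{2\operatorname{d}(N)}$ with enough slack to absorb the additive $\operatorname{d}(N)$. The only difference is cosmetic: where the paper invokes an (unspelled-out) induction on $\omega(N)$ to get $\varOmega'(N)+N^2\leq\rad(N)^{2\operatorname{d}(N)}$, you prove the explicit bound $\varOmega'(N)\leq\rad(N)^{2\operatorname{d}(N)-2}$ directly from the definition, which is arguably cleaner; your final paragraph about reducing to prime powers is unnecessary given that the direct estimate already handles general $N$.
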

\begin{proof}
From the proof of Theorem~\ref{b1}.($b$), 
we 
see that $\varOmega(N)<\Omega'(N)+\operatorname{d}(N)$ for all
$N>1$. 
By induction on the number of prime divisors of $N$, it follows easily that $\varOmega'(N)+ 
N^2\leq\rad(N)^{2\operatorname{d}(N)}$
for all $N>1$. 
\end{proof}


\section{The common multiple with the least weight} 
\label{27.08.2015}
In the previous section, we saw that 
if a holomorphic eta quotient on $\GN$
is not factorizable on $\GN$, 
then its weight is at most equal to $\kappa(N)/2$. In this section, we show that 
$\kappa(N)/2$ is 
the 
smallest possible weight
for an eta quotient $f$ such that
for each 
holomorphic eta quotient $g$ which is 
not factorizable on $\Gm_0(N)$,
$f/g$ is holomorphic (see Theorem~\ref{17.10Sept}). 

\begin{lem}
For $N\in\N$ and $t\in\D$, the holomorphic eta quotient $\e^{B_N(\un\hs,t)}$ is not factorizable on $\GN$,
where $B_N\in\Z^{\D\times\D}$ is as defined in $(\ref{28.08.2015})$.
\label{waybackhome}
\end{lem}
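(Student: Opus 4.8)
The plan is to use the fact (recorded just before the lemma, and resting on the invertibility of $A_N$) that $\e^{B_N(\un\hs,t)}$ vanishes at no cusp of $\GN$ other than the $\varphi(\gcd(t,N/t))$ cusps represented by rationals $a/t$ with $\gcd(a,t)=1$. Quantitatively, since $B_N(\un\hs,t)=m_{t,N}\cdot A_N^{-1}(\un\hs,t)$ by (\ref{28.08.2015}), equation (\ref{28.04}) gives $\cl{O}_N(B_N(\un\hs,t))=\frac{m_{t,N}}{24}\,A_N A_N^{-1}(\un\hs,t)=\frac{m_{t,N}}{24}\,e_t$, where $e_t\in\ZD$ denotes the standard basis vector indexed by $t$. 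Thus $\ord_{1/t}(\e^{B_N(\un\hs,t)};\GN)=m_{t,N}/24$ while $\ord_{1/s}(\e^{B_N(\un\hs,t)};\GN)=0$ for every $s\in\D$ with $s\neq t$.

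Next I would argue by contradiction: suppose $\e^{B_N(\un\hs,t)}=g\cdot h$ with $g=\e^Y$ and $h=\e^Z$ nonconstant holomorphic eta quotients on $\GN$ (so $Y,Z\in\ZD$). Linearity of $\cl{O}_N$ gives $\cl{O}_N(Y)+\cl{O}_N(Z)=\frac{m_{t,N}}{24}\,e_t$, and holomorphy of $g$ and $h$ forces $\cl{O}_N(Y)\geq0$ and $\cl{O}_N(Z)\geq0$; since their sum has every coordinate zero except the one indexed by $t$, both orders must be supported only at the cusp $1/t$, say $\cl{O}_N(Y)=c\,e_t$ with $c\geq0$ and likewise for $Z$. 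The valence formula (\ref{27.04.2015.3}) applied to $g$ shows $c>0$: if $c=0$, all cusp orders of $g$ would vanish, so $g$ would have weight $0$, hence $A_NY=0$, hence $Y=0$ by invertibility of $A_N$, contradicting that $g$ is nonconstant. Symmetrically $\ord_{1/t}(h;\GN)>0$.

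Finally I would bring in the minimality defining $m_{t,N}$. From $\cl{O}_N(Y)=c\,e_t$ and (\ref{28.04}) we get $Y=24c\cdot A_N^{-1}(\un\hs,t)$; since $Y\in\ZD$ and $24c$ is a positive integer (cusp orders lie in $\frac1{24}\Z$ by (\ref{26.04.2015})), and since $\{k\in\Z:kA_N^{-1}(\un\hs,t)\in\ZD\}$ is the subgroup $m_{t,N}\Z$ of $\Z$, it follows that $m_{t,N}\mid 24c$, i.e.\ $\ord_{1/t}(g;\GN)=c\geq m_{t,N}/24$. By the same token $\ord_{1/t}(h;\GN)\geq m_{t,N}/24$. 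But these two orders add up to $m_{t,N}/24$, which is impossible. Hence no such factorization exists, and $\e^{B_N(\un\hs,t)}$ is not factorizable on $\GN$. The only point needing genuine care is this last one --- recognising that the set of integer multipliers clearing the denominators of $A_N^{-1}(\un\hs,t)$ is exactly $m_{t,N}\Z$, so that the half-integral order $c$ is forced to satisfy $c\geq m_{t,N}/24$; everything else is bookkeeping with the order map and the valence formula.
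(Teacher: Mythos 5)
Your proof is correct and follows essentially the same route as the paper's: both arguments observe that the order vector of $\e^{B_N(\un\hs,t)}$ is supported only at the cusp $1/t$ with value $m_{t,N}/24$, deduce via holomorphy and the invertibility of $A_N$ that each putative factor's exponent vector is a positive rational multiple of $A_N^{-1}(\un\hs,t)$, and then contradict the minimality of $m_{t,N}$. The only cosmetic difference is that the paper concludes directly that the multipliers $m',m''<m_{t,N}$ force non-integral exponent vectors, whereas you argue that $m_{t,N}$ must divide each multiplier and hence the two orders sum to more than $m_{t,N}/24$; these are equivalent uses of the same minimality.
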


\begin{proof}
For $t\in\D$ and for 
$Y=A_N\cdot B_N(\un\hs,t)\in\ZD$, from (\ref{28.08.2015})
we get
$$
Y_d=\left\{\begin{array}{cl}
                       m_{{t,N}}&\text{if $d=t$}\\
0&\text{otherwise}
                      \end{array}\right.
$$
for all $d\in\D$. Recall that for $X\in\ZD$, $\eta^X$ 
is holomorphic if and only if $A_NX\geq 0$  (see \ref{28.04}).
Suppose, $\e^{B_N(\un\hs,t)}$ is factorizable on $\GN$. Then there are $X',X''\in\bb{Z}^{\D}\smallsetminus\{0\}$ 
with $B_N(\un\hs,t)=X'+X''$ such that 
$A_NX'\geq0$ and $A_NX''\geq0$. Hence, there exist $m',\hs m''>0$ 
with $m_{{t,N}}=m'+m''$  such that for $d\in\D$, we have
$$
(A_NX')_d=\left\{\begin{array}{cl}
                       m'&\text{if $d=t$,}\\
0&\text{otherwise}
                      \end{array}\right.
\hspace{.5cm}\text{and}\hspace{.6cm}
(A_NX'')_d=\left\{\begin{array}{cl}
                      m''&\text{if $d=t$,}\\
0&\text{otherwise.}
                      \end{array}\right.
$$
In other words, we have $X'= 
m'\cdot A_N^{-1}(\un\hs,d)$\hspace{2pt} and\hspace{2pt} $X'' 
={m''}\cdot A_N^{-1}(\un\hs,d)$. Since
${m'}, {m''}<{m_{{t,N}}}$ and since $m_{{t,N}}$ is the smallest positive integer such that 
$m_{{t,N}}\cdot A_N^{-1}(\un\hs,t)\in\bb{Z}^{\D}$, we conclude that $X'\notin\bb{Z}^{\D}$ and $X''\notin\bb{Z}^{\D}$. 
Thus, we get a contradiction! Hence,  $\eta^{B_N(\un\hs,t)}$ is not factorizable on $\GN$.
\end{proof}

\ \newline
\textit{Proof of Theorem \ref{17.10Sept}.} 
Let $F_N$ be the same as 
in (\ref{08.09.2015}). Then Lemma~\ref{08.09.2015A} and Lemma~\ref{b2}
together imply that if a holomorphic eta quotient $h$ on $\GN$ is divisible by $F_N$,
then it is divisible by all the holomorphic eta quotients on $\GN$ which are not factorizable on $\GN$.

Conversely, let a holomorphic eta quotient $h$ on $\Gamma_0(N)$ 
be divisible by each 
holomorphic eta quotient $g$ on $\GN$ which is not factorizable on $\Gamma_0(N)$.  
Let $B_N\in\Z^{\D\times\D}$ be as defined in $(\ref{28.08.2015})$.
Then Lemma~\ref{waybackhome} implies that 
$h$
is divisible by $\e^{B_N(\un\hs,t)}$  for all $t\in\D$.
So in particular, we have 
\begin{equation}
\operatorname{ord}_{1/t}(h\hs;\GN)\geq \operatorname{ord}_{1/t}(\e^{B_N(\un\hs,t)};\GN)=\operatorname{ord}_{1/t}(F_N;\GN),
\label{09.09.2015}\end{equation}
where the last equality 
holds since $F_N$ is the product of all the eta quotients 
$\{\e^{B_N(\un\hs,t)}\}_{\substack{\ \\ t\in\D}}$, 
and
since $(\ref{28.08.2015})$ and $(\ref{28.04})$ together imply that $\e^{B_N(\un\hs,t)}$ has nonzero order only at the cusp
$1/t$ of $\GN$.
Since any eta quotient on $\GN$ is uniquely determined by its orders at the set of the cusps
$\{1/t\}_{\substack{\ \\ t\in\D}}$ of $\GN$, from (\ref{09.09.2015})\hspace{1.3pt} 
it follows that $h$ is divisible by $F_N$.
\qed

\section{Examples of irreducible holomorphic eta quotients}
In this section, we shall show that there exist holomorphic eta quotients of arbitrarily large weights (see Theorem~\ref{10.09.2015}).

\begin{lem}
For $N\in\N$ and $t\in\cl{D}_{N/\rad(N)}$, the holomorphic eta quotient
$\e^{B_N(\un\hs,t)}$ is irreducible, where $B_N\in\Z^{\D\times\D}$ is as defined in $(\ref{28.08.2015})$.
\label{10.09.2015.1}\end{lem}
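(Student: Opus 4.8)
The plan is to show that $\e^{B_N(\un\hs,t)}$ is irreducible --- i.e., not factorizable on $\Gm_0(M)$ for \emph{any} multiple $M$ of $N$ --- by exploiting the extra hypothesis $t\in\cl{D}_{N/\rad(N)}$, which says precisely that for every prime $p\mid N$ with $p^n\|N$, the exponent $j$ of $p$ in $t$ satisfies $1\le j\le n-1$; equivalently, $\rad(N)\mid t$ and $\rad(N)\mid N/t$, so that $\rad(N)\mid\gcd(t,N/t)$. First I would recall from Lemma~\ref{waybackhome} that $\e^{B_N(\un\hs,t)}$ is already known to be not factorizable on $\Gm_0(N)$, so the work lies entirely in passing from $\Gm_0(N)$ to an arbitrary multiple $M$. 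The natural strategy is the standard ``ascent'' argument: suppose $\e^{B_N(\un\hs,t)}=g\times h$ with $g,h$ nonconstant holomorphic eta quotients on $\Gm_0(M)$; I want to derive a contradiction by examining the orders of $g$ and $h$ at suitable cusps of $\Gm_0(M)$ lying above the cusp $1/t$ of $\Gm_0(N)$.

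The key computation is the order of $\e^{B_N(\un\hs,t)}$ at the cusps of $\Gm_0(M)$. By Lemma~\ref{waybackhome} (its proof), $A_N\cdot B_N(\un\hs,t)$ is $m_{t,N}$ at the coordinate $t$ and $0$ elsewhere, so $\e^{B_N(\un\hs,t)}$ has a zero of order $m_{t,N}/24$ at the cusp $1/t$ of $\Gm_0(N)$ and vanishes nowhere else on $\Gm_0(N)\backslash\H$ nor at any other cusp. I would then use the formula~(\ref{26.04.2015}) for $\ord_s(\e_d;\Gm_0(M))$ to track where this single zero goes: the cusps of $\Gm_0(M)$ lying over $1/t\in\cl{S}_N$ are of the form $a/t'$ with $t'$ a divisor of $M$ whose ``$N$-part'' is $t$, and one checks that $\e^{B_N(\un\hs,t)}$ has \emph{positive} order at the cusp $1/t'$ for some such $t'$ but order \emph{zero} at a companion cusp $a/t'$ with $a\not\equiv 1$ modulo $\gcd(t',M/t')$ --- here is where $\rad(N)\mid\gcd(t,N/t)$ is essential: it guarantees that over $\Gm_0(M)$ the cusp $1/t$ splits into genuinely inequivalent cusps, some of which the eta quotient misses. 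Concretely, because $\gcd(t,N/t)$ already contains $\rad(N)$, for a multiple $M$ of $N$ the quantity $\gcd(t,M/t)$ still carries the same prime support, and~(\ref{27.04.2015.1}) fails to force equal orders across the $a$'s --- so $\e^{B_N(\un\hs,t)}$ vanishes at $1/t'$ but not at every $a/t'$.

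Now I would run the factorization argument. If $\e^{B_N(\un\hs,t)}=g\times h$ on $\Gm_0(M)$ with both factors holomorphic and nonconstant, then at \emph{every} cusp of $\Gm_0(M)$ the orders of $g$ and $h$ are nonnegative and sum to that of $\e^{B_N(\un\hs,t)}$; in particular both $g$ and $h$ have order $0$ at every cusp $a/t'$ where $\e^{B_N(\un\hs,t)}$ does, and all the ``mass'' (total order, controlled by the valence formula~(\ref{3.2Aug}) applied to $g$ and to $h$ on $\Gm_0(M)$) is concentrated over the finitely many cusps above $1/t$. The aim is to contradict the minimality of $m_{t,N}$: pulling $g$ (or $h$) back down, its exponent vector, read through $A_N^{-1}$, would be a \emph{proper} nonzero rational multiple of $A_N^{-1}(\un\hs,t)$ that nevertheless has integer entries --- exactly the contradiction obtained in the proof of Lemma~\ref{waybackhome}, now exported to level $M$. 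The clean way to package this is: show any holomorphic eta quotient on $\Gm_0(M)$ dividing $\e^{B_N(\un\hs,t)}$ must in fact be an eta quotient on $\Gm_0(N)$ (because its divisor is supported over cusps lying above $1/t$ and the $\Gm_0(N)$-level data determines it), and then invoke Lemma~\ref{waybackhome} verbatim.

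The main obstacle I anticipate is the descent step: given a holomorphic eta quotient $g$ on $\Gm_0(M)$ that divides $\e^{B_N(\un\hs,t)}$, proving that $g$ is actually (a rescaling-free) eta quotient of level dividing $N$, rather than merely ``of level dividing $M$ with restricted support.'' This requires controlling the exponent vector of $g$ from its orders at cusps of $\Gm_0(M)$ above $1/t$ --- essentially inverting the order matrix $A_M$ on the relevant block --- and verifying that the constraint ``order zero at $1/d$ for every $d\mid M$ not of the form (divisor of $N$)$\times$(extra part)'' forces the new prime exponents to vanish. I expect this to follow from the multiplicativity~(\ref{13May}), the tridiagonal shape of $A_{p^n}^{-1}$ in~(\ref{r1}), and the hypothesis $\rad(N)\mid\gcd(t,N/t)$, but it is the place where the argument must be made carefully rather than waved through; once it is done, irreducibility is immediate from Lemma~\ref{waybackhome}.
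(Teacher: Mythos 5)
Your argument has two genuine problems, one fatal to the mechanism you propose and one structural. The fatal one is the ``key computation'': you claim that over $\Gm_0(M)$ the eta quotient $\e^{B_N(\un\hs,t)}$ has positive order at $1/t'$ but order zero at some companion cusp $a/t'$, because (\ref{27.04.2015.1}) ``fails to force equal orders across the $a$'s.'' This is exactly backwards. The order formula (\ref{26.04.2015}) (hence (\ref{27.04}) and (\ref{27.04.2015.1})) shows that the order of \emph{any} eta quotient at a cusp $a/t'$ of $\Gm_0(M)$ depends only on the denominator $t'$ and never on $a$, at every level; so the zero of $\e^{B_N(\un\hs,t)}$ does not ``miss'' any of the inequivalent cusps lying over $1/t$, and the contradiction you hope to extract from the splitting of that cusp is not there. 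You have also misread the hypothesis: $t\in\cl{D}_{N/\rad(N)}$ means $t\cdot\rad(N)\mid N$ (i.e.\ $j\le n-1$ whenever $p^j\|t$ and $p^n\|N$), not $\rad(N)\mid\gcd(t,N/t)$; in particular $t=1$ is allowed, so the divisibility you lean on need not hold.

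The structural problem is that the ``descent step'' you flag as the main obstacle is the entire content of the lemma, and it is not supplied; nor is it what the paper does. The paper first invokes Theorem~3 of \cite{B-three} to reduce to multiples $M$ of $N$ with $\rad(M)=\rad(N)$ --- this external input is how new prime factors of $M$ are eliminated, and it is not recoverable from the cusp-order bookkeeping you sketch. For such $M$ the argument is then purely algebraic: since $t\mid N/\rad(N)\mid M/\rad(M)$, the tridiagonal shape of $B_{p^n}$ in (\ref{equ1}) together with (\ref{31.08.2015}) shows the column is stable under raising exponents, i.e.\ $B_M(\un\hs,t)$ is just $B_N(\un\hs,t)$ padded with zeros, so $\e^{B_N(\un\hs,t)}=\e^{B_M(\un\hs,t)}$ and Lemma~\ref{waybackhome} applies verbatim at level $M$. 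No claim that the factors $g,h$ descend to level $N$ is made or needed, and indeed Theorem~3 of \cite{B-three} only controls $\rad(M)$, not $M$ itself, so the stronger descent you propose would require a separate (and unproved) argument.
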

\begin{proof} 
From  Theorem~3 
in \cite{B-three}, we know that a holomorphic 
eta quotient on $\GN$ is reducible only if it is factorizable on some $\Gm_0(M)$ 
for some multiple $M$ of $N$ 
with $\rad(M)=\rad(N)$.
Suppose, for some $t\in\cl{D}_{N/\rad(N)}$ 
the \he $\eta^{B_N(\un\hs,t)}$
is reducible. Then there exists a multiple $M$ of $N$ with $\rad(M)=\rad(N)$
such that $\eta^{B_N(\un\hs,t)}$ is factorizable on $\Gm_0(M)$. Since 
$t\in\cl{D}_{N/\rad(N)}\subseteq\cl{D}_{M/\rad(M)}$, it follows from 
(\ref{31.08.2015}) and (\ref{equ1}) that $B_M(d,t)=B_N(d,t)$
for all $d\idi N$ and $B_M(d,t)=0$ if $d\nidi N$. In other words, we have
$\eta^{B_N(\un\hs,t)}=\e^{B_M(\un\hs,t)}$ which is not factorizable on $\Gm_0(M)$ by Lemma~\ref{waybackhome}.
Thus, we get a contradiction! Hence,  for all $t\in\cl{D}_{N/\rad(N)}$, 
 $\eta^{B_N(\un\hs,t)}$
is irreducible. 
\end{proof}
\ \newline
\textit{Proof of Theorem~\ref{10.09.2015}.} Since for all 
$X\in\ZD$, the weight of the eta quotient $\e^X$ is $\sigma(X)/2$, the theorem 
follows immediately from Lemma~\ref{10.09.2015.1}, (\ref{30.08.2015})
and from the fact that for $t=N/\rad(N)$, the eta quotient $\e^{B_N(\un\hs,t)}$ is of level~$N$ (see \ref{31.08.2015} and \ref{equ1}).
\qed

 \section*{Appendix: Comparison of the weights}
\label{26.08.2015}\hypertarget{mxwt}{}
By ${k_{\max}}(N)/2$, we denote 
the maximum of the weights of  \hes of level $N$ which are not factorizable on $\GN$.
Let $p$ be a prime. From 
the \hyperlink{levp}{discussion} about holomorphic eta quotients on $\Gm_0(p)$ in Section~\ref{intro},
it follows that 
${k_{\max}}(p)=p-1$. 
Also, from Theorem~6.4 in \cite{B-two}, we 
know ${k_{\max}}(p^2)=(p-1)^2$.
With the support of a huge amount of experimental data, we make the following conjecture:
\begin{conj}
$(a)$ For each prime number $p$, all the irreducible holomorphic eta quotients of level $p^3$
are rescalings of eta quotients of smaller levels. In particular,
we have 
${k_{\max}}(p^3)=(p-1)^2$.
\newline$(b)$ For each odd prime $p$ 
and for all integers $n>3$, we have
\begin{equation}
k_{\max}(p^n)=(n-1)(p-1)^2-2^{r_n}\Big(\Big{\lfloor}\frac{n}{2}\Big{\rfloor}(p-1)-1\Big),
\label{May 20, 2017}\end{equation}
where $r_n\in\{0,1\}$ is the residue of $n$ modulo $2$.
\end{conj}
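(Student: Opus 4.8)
The plan is to recast the conjecture as a question about the Hilbert basis of an explicit family of rational cones, and then treat the two parts separately. For a prime power $p^n$, let $M_{p^n}:=\{X\in\Z^{\cl{D}_{p^n}}:A_{p^n}X\geq0\}$ be the monoid of exponent vectors of holomorphic eta quotients on $\Gm_0(p^n)$. Since every nonconstant holomorphic eta quotient has positive weight, the only $X\in M_{p^n}$ with $\sigma(X)=0$ is $X=0$; hence a nonconstant holomorphic eta quotient $\e^X$ of level dividing $p^n$ fails to be factorizable on $\Gm_0(p^n)$ precisely when $X$ lies in the Hilbert basis $\operatorname{Hilb}(M_{p^n})$ (i.~e.\ $X$ is not a sum of two nonzero elements of $M_{p^n}$). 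Consequently $k_{\max}(p^n)=\max\{\sigma(X):X\in\operatorname{Hilb}(M_{p^n}),\ \e^X\text{ has level }p^n\}$. By Lemma~\ref{b2} each such $X$ is either a column $B_{p^n}(\un\hs,p^j)$ or a lattice point of the half-open parallelepiped $B_{p^n}\cdot[0,1)^{\cl{D}_{p^n}}$. The columns are fully understood: by (\ref{equ2}) their $\sigma$-values are $p-1$ for $j\in\{0,n\}$ and $(p-1)^2$ otherwise, and, by the computation in the proof of Theorem~\ref{b1}$(c)$, $\e^{B_{p^n}(\un\hs,p^j)}$ has level $p^n$ iff $j\geq n-1$. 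So for every $n\geq2$ the columns alone give $\max\sigma=(p-1)^2$, whence $k_{\max}(p^n)\geq(p-1)^2$, and the whole conjecture reduces to a statement about the parallelepiped lattice points.

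For part $(a)$ I would handle $n=3$ head on. Using the explicit $4\times4$ matrices $B_{p^3}$ and $B_{p^3}^{-1}$ as in (\ref{equ1}) and (\ref{May 16, 2017}), parametrise the lattice points of $B_{p^3}\cdot[0,1)^{\cl{D}_{p^3}}$ as $X=B_{p^3}Y$ with $Y=B_{p^3}^{-1}X\in[0,1)^{4}$, and show — by a finite case analysis on the ``shape'' of $X$, uniform in $p$ — that whenever $\sigma(X)>(p-1)^2$ one can split $X=X'+X''$ with $A_{p^3}X'\geq0$, $A_{p^3}X''\geq0$, $X',X''\neq0$. This yields $k_{\max}(p^3)=(p-1)^2$. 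For the ``rescalings'' assertion I would invoke Theorem~3 of \cite{B-three} (a holomorphic eta quotient of level $p^3$ is reducible only if factorizable on some $\Gm_0(p^m)$) together with the fact that Theorem~\ref{b1} bounds the levels of the possible factors; it then remains to match the finitely many level-$p^3$ members of $\operatorname{Hilb}(M_{p^3})$ against the $p$-, $p^2$- and $p^3$-rescalings of the (already classified) holomorphic eta quotients that are not factorizable on $\Gm_0(p)$ or on $\Gm_0(p^2)$, and to check that every one of the former is such a rescaling.

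For part $(b)$, the lower bound is a construction. For $n\geq4$ and odd $p$, one writes down an explicit lattice vector $X_n\in B_{p^n}\cdot[0,1)^{\cl{D}_{p^n}}$ such that $\e^{X_n}$ has level $p^n$, whose profile $Y:=B_{p^n}^{-1}X_n$ is as large as integrality of $X_n$ allows and which is ``centred'' on the chain $1\mid p\mid\cdots\mid p^n$; the identity $\sigma(X_n)=(p-1)(Y_1+Y_{p^n})+(p-1)^2\sum_{0<j<n}Y_{p^j}$ should then reproduce the right-hand side of (\ref{May 20, 2017}), the chain having one middle term when $n$ is even and two when $n$ is odd being exactly what accounts for the factor $2^{r_n}$ and for $\lfloor n/2\rfloor$. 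One then verifies $X_n\in\operatorname{Hilb}(M_{p^n})$ — that it admits no nontrivial splitting in $M_{p^n}$ — by a divisibility argument on the orders $A_{p^n}X_n$ at the cusps of $\Gm_0(p^n)$, in the spirit of the proof of Lemma~\ref{waybackhome}.

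The matching upper bound in part $(b)$, namely $k_{\max}(p^n)\leq\sigma(X_n)$, is the step I expect to be the main obstacle: one must show that \emph{every} lattice point of $B_{p^n}\cdot[0,1)^{\cl{D}_{p^n}}$ with $\e^X$ of level $p^n$ and $\sigma(X)>\sigma(X_n)$ is decomposable in $M_{p^n}$. The natural mechanism is the banded structure of $A_{p^n}$ — its inverse is tridiagonal, see (\ref{r1}) — which should allow one, whenever the support of $X$ inside $\{p^0,\dots,p^n\}$ is wide enough, to split off a factor supported on a consecutive sub-chain of divisors, the holomorphy inequalities (\ref{27.04}) restricted to such a block being governed by a matrix of the same tridiagonal-inverse type whose own Hilbert basis is under control. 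This is delicate because the estimate must be \emph{tight} and must hold uniformly over the infinite family $\{M_{p^n}\}_{p,n}$, and because the correction term $2^{r_n}(\lfloor n/2\rfloor(p-1)-1)$ signals that the extremal configuration — and hence the decomposition argument that must exclude everything above it — genuinely bifurcates with the parity of $n$; a clean proof will likely require an explicit description of at least the ``top'' of $\operatorname{Hilb}(M_{p^n})$, rather than the mere containment supplied by Lemma~\ref{b2}.
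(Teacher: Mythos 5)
You should first be aware that the paper does not prove this statement at all: it is stated as a \emph{conjecture} in the appendix, offered ``with the support of a huge amount of experimental data.'' The paper explicitly says that the matching lower bound (irreducible eta quotients of level $p^n$ attaining the weight in (\ref{May 20, 2017})) is constructed elsewhere, and that ``the catch'' --- showing that every holomorphic eta quotient of level $p^n$ of larger weight is reducible --- is itself an open conjecture in \cite{B-seven}. So there is no proof in the paper to compare yours against, and any complete argument here would be new mathematics.

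Measured against that, your proposal is a programme rather than a proof, and the gaps sit exactly where the difficulty is known to be. The framing is sound and consistent with the paper: identifying the non-factorizable eta quotients on $\Gm_0(p^n)$ with the Hilbert basis of the monoid $\{X:A_{p^n}X\geq0\}$ is correct (the monoid is sharp because weights are positive), Lemma~\ref{b2} does confine that Hilbert basis to the columns of $B_{p^n}$ together with lattice points of $B_{p^n}\cdot[0,1)^{\cl{D}_{p^n}}$, and the column computation giving $k_{\max}(p^n)\geq(p-1)^2$ is right. But everything beyond that is deferred: the ``finite case analysis uniform in $p$'' for $n=3$ is not exhibited (the number of lattice points in the parallelepiped grows like $p^{2n}(1-p^{-2})^{n+1}$ by Lemma~\ref{May 14,2017}, so uniformity in $p$ is not a routine finiteness argument --- you must find a splitting mechanism, not enumerate); the explicit vector $X_n$ for the lower bound in $(b)$ is not written down, nor is its indecomposability verified (note that the argument of Lemma~\ref{waybackhome} uses that $A_{p^n}X$ is supported at a single cusp, which will not be the case for your $X_n$, so that proof does not transfer ``in spirit'' without a new idea); and the upper bound in $(b)$, which you correctly identify as the main obstacle, is precisely the open Conjecture~1 of \cite{B-seven}. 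Your proposed mechanism for it --- splitting off factors supported on consecutive sub-chains of divisors using the banded structure of $A_{p^n}^{-1}$ --- is plausible as a heuristic, but as you note yourself it must be tight enough to reproduce the parity-dependent correction term $2^{r_n}\bigl(\lfloor n/2\rfloor(p-1)-1\bigr)$, and no candidate lemma achieving this is formulated. In short: the reduction is correct, the conjecture is restated in a clean combinatorial language, but no step that would actually close either part is supplied.
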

For all odd primes $p$ and for 
all integers $n>3$,  in \cite{B-seven}\hspace*{1.1pt} we see examples of irreducible holomorphic eta quotients of level $p^n$ and
of the same weight as in $(\ref{May 20, 2017})$ 
(see Corollary~1 and (2.1) in \cite{B-seven}). However, 
the catch of the above problem 
is to show that any
holomorphic eta quotient of level $p^n$ whose
weight is greater than the 
quantity given in $(\ref{May 20, 2017})$,
must 
be reducible (see~Conjecture~1 in~\cite{B-seven} and Theorem~2 in~\cite{B-three}).

In the table below, we compare ${k_{\max}}(N)$ with $\kappa(N)$ for several 
\mbox{$N\in\N$},
where 
$\kappa(N)/2$ 
is the weight of the eta quotient~$F_N$\hs  
which we defined in Theorem~\ref{17.10Sept}\hspace*{1.2pt} (see also \ref{08.09.2015}). 
Since we have already discussed above the
cases of odd prime powers as well as those of $2^n$ for 
$n\leq3$, 
we 
omit such levels from the following table.
\begin{table}[h]
 \caption{$k_{\max}(N)$ vs. $\kappa(N)$}
\begin{center}
 \begin{tabular}{c|c|c} 
N&$k_{\max}$& $\kappa$ 
\\\hline
$2\cdot3$&2&8\\$2\cdot5$&4&16\\$2^2\cdot3$&3&12\\$2\cdot7$&6&24\\$3\cdot5$&8&32\\$2^4$&2&5\\$2\cdot3^2$&5&16\\$2^2\cdot5$&5&24\\$3\cdot7$&12&48\\$2^3\cdot3$&5&16\\$2\cdot13$&12&48\\
$2^2\cdot7$&8&36\\$2\cdot3\cdot5$&15&64\\$2^5$&2&6\\$2\cdot17$&16&64\\$2^2\cdot3^2$&6&24\\$2\cdot19$&18&72
\end{tabular}
\quad
  \begin{tabular}{c|c|c} 
N&$k_{\max}$& $\kappa$ 
\\\hline
$3\cdot13$&24&96\\$2^3\cdot5$&8&32\\
$2\cdot3\cdot7$&23&96\\$2^2\cdot11$&13&60\\$3^2\cdot5$&18&64\\$2\cdot23$&22&88\\$2^4\cdot3$&6&20\\$2\cdot5^2$&17&48\\$3\cdot17$&32&128\\$2^2\cdot13$&16&72\\$2\cdot3^3$&7&24\\$2^3\cdot7$&12&48\\$3\cdot19$&36&144\\$2^6$&3&7
\\$2\cdot3\cdot11$&38&160\\$2^2\cdot17$&20&96\\$2\cdot5\cdot7$&33&192
\end{tabular}
\quad
  \begin{tabular}{c|c|c} 
N&$k_{\max}$& $\kappa$ 
\\\hline
$2\cdot37$&36&144\\$2\cdot3\cdot13$&45&192\\$2^4\cdot5$&11&40
\\$5\cdot17$&64&256\\
$2^3\cdot11$&20&80\\$2\cdot47$&46&184
\\$2^5\cdot3$&8&24\\$2\cdot7^2$&37&96\\$3^2\cdot11$&45&160\\$2^2\cdot5^2$&25&72\\$2\cdot3\cdot17$&60&256\\$2^3\cdot13$&24&96
\\$3\cdot5\cdot7$&56&384\\$3\cdot37$&72&288\\$2^4\cdot7$&18&60
\\$2^7$&3&8
\\$7\cdot19$&108&432
\end{tabular}
\end{center}
\label{table.II}
\end{table}

\begin{center}
 \begin{tabular}{c|c|c} 
N&$k_{\max}$& $\kappa$ 
\\\hline
$3^3\cdot5$&32&96\\$2^3\cdot17$&34&128\\$2^2\cdot37$&48&216
\\$2\cdot3^4$&13&32\\$2\cdot5\cdot17$&85&512
\\$2^2\cdot43$&56&252
\end{tabular}
\quad
  \begin{tabular}{c|c|c} 
N&$k_{\max}$& $\kappa$ 
\\\hline
$2^4\cdot11$&30&100
\\$11\cdot19$&180&720\\$2^8$&4&9\\$2^9$&5&10\\
$2\cdot503$&502&2008\\$2^{10}$&6&11
\end{tabular}
\quad
  \begin{tabular}{c|c|c} 
N&$k_{\max}$& $\kappa$ 
\\\hline
$17\cdot97$&1536&6144
\\$2^{11}$&6&12\\
$2^{12}$&7&13\\$2^{13}$&7&14\\$2^{14}$&9&15\\$2^{15}$&9&16
\end{tabular}
\end{center}

\section*{Acknowledgments}
I am thankful to Sander Zwegers,
who asked during my talk at Cologne 
whether a Mersmann type finiteness theorem 
holds if we keep the level of the eta quotients fixed instead of their weight.
Corollary~\ref{b11} is precisely an answer to 
his question. 
I would like to thank 
Don Zagier,  Christian Wei\ss, Danylo Radchenko, Armin Straub,
Nadim Rustom and Christian Kaiser
for their comments.
I 
made
the computations for the above tables
using 
$\mathtt{PARI/GP}$~\cite{PARI2} and $\mathtt{Normaliz}$~\cite{Norm}
which I learnt to use from Don and
Danylo. I~am grateful to them
for acquainting me with 
these very useful computational tools.
In~particular, Danylo 
computed 
$\hyperlink{mxwt}{k_{\max}}(24)$, $\hyperlink{mxwt}{k_{\max}}(28)$ and $\hyperlink{mxwt}{k_{\max}}(30)$ 
for the above table.
I~am grateful 
to the CIRM~:~FBK (International Center for Mathematical Research of the Bruno Kessler Foundation) in Trento
for providing me 
with 
an office space and supporting me with a fellowship 
 during the preparation of this article.

\bibliography{fixedlevel-bibtex}
\bibliographystyle{IEEEtranS}
\nocite{*}

 \end{document}